\newtheorem{theorem}{Theorem}[section]
\newtheorem{lemma}[theorem]{Lemma}
\newtheorem{definition}[theorem]{Definition}
\newtheorem{remark}[theorem]{Remark}
\title{\textbf{On the solutions to variable-order fractional $p$-Laplacian evolution equation with $L^1$-data}}
\author{\textbf{Sixuan Liu, Gang Dong, Hui Bi and Boying Wu}}
\date{}
\begin{document}
\maketitle
\large
\noindent\textbf{Abstract:}
This study investigates Dirichlet boundary condition related to a class of nonlinear parabolic problem with nonnegative $L^1$-data, which has a variable-order fractional $p$-Laplacian operator.
The existence and uniqueness of renormalized solutions and entropy solutions to the equation is proved. 
To address the significant challenges encountered during this process, we use approximation and energy methods.
In the process of proving, the well-posedness of weak solutions to the problem has been established initially, while also establishing a comparative result of solutions.\\
\textbf{keywords:}Fractional $p$-Laplacian, variable-order, renormalized solutions, entropy solutions, existence and uniqueness.\\

\section{Introduction}
Assume that $\Omega\subset\mathbb{R}^N$ is a bounded domain with a Lipschitz boundary $\partial\Omega$, where $N\geq2,\;T>0$, and $f$ is a measurable function. This article studies the following variable-order nonlinear parabolic problem:
\begin{equation}\label{Quotient}
    \left\{\begin{array}{ll}
    u_t+(-\Delta)_p^{s(\cdot,\cdot)}u=f\quad    &\text{in}\quad\Omega_T=\Omega\times(0,T),\\u=0\quad&\text{in}\quad(\mathbb{R}^N\backslash\Omega)\times(0,T),\\
    u(x,0)=u_0(x)\quad
    &\text{in}\quad\Omega,
    \end{array}\right.
\end{equation}
where $1<p<+\infty, 
s:\mathbb{R}^{N}\times\mathbb{R}^{N}\to(0,1)$ is a symmetric continuous function, 
$ps\left(x,y\right)<N$  holds for any $(x,y)\in\mathbb{R}^N\times\mathbb{R}^N$. 
And $(-\Delta)_p^{s(\cdot,\cdot)}$ is the variable-order fractional $p$-Laplacian operator defined as
\begin{equation*}
\begin{split}
    (-\Delta)_{p}^{s(\cdot,\cdot)}u(x,t)
    &:=\mathrm{P.V.}\int_{\mathbb{R}^{N}}
    \frac{|u(x,t)-u(y,t)|^{p-2}\big(u(x,t)-u(y,t)\big)}
    {|x-y|^{N+p{s(x,y)}}} dy\\
    &=\lim_{\varepsilon\to0^+}\int_{\mathbb{R}^{N}\setminus B_{\varepsilon}(x)}
    \frac{|u(x,t)-u(y,t)|^{p-2}\big(u(x,t)-u(y,t)\big)}
    {|x-y|^{N+p{s\left(x,y\right)}}} dy,
\end{split}
\end{equation*}
where $(x,t)\in\mathbb{R}^N\times\mathbb{R}^+$, 
P.V. is a commonly used abbreviation for “in the principal value sense”. It is also a normalization factor. Moreover, we assume that $f$ and $u_0$ are nonnegative functions.

It is commonly known that the study of the well-posedness of the fractional $p$-Laplacian equation with $L^1$ and measure data has been intensively investigated and applied\cite{ref25,ref24}.
In the examination of this category of nonlinear issues, variable-order spaces is of significant importance.
The mathematics team led by C. Zhang has conducted extensive research on fractional and variable exponents, thereby establishing a foundational framework for analyzing subsequent complex models, further advancing the development of this field in both mathematical theory and applications.
We consult \cite{ref100,ref101,ref102,ref103,ref104,ref105,ref106,ref107,ref108,ref109} for further information regarding the variable exponent Sobolev space and the associated equations that involve variable exponents.

The investigation of problems involving variable-order represents a novel and intriguing area of research.
The variable-order fractional $p$-Laplacian incorporates spatial dependence in the fractional-order operator denoted by $s$.
A variational model utilizing a variable-order function $x \to s(x)$ was introduced in \cite{ref4}. Subsequently, a definition of the operator $(-\Delta)^{s(x)}$ on $\mathbb{R}^N$ was established in \cite{ref18}, while the definition for bounded domains was presented in \cite{ref11}.
In a recent publication, Y.Li \cite{ref33} presented a compact embedding theorem along with some further related properties on the variable-order Sobolev fractional space.
Motivated by \cite{ref104} and  \cite{ref33}, we conduct a study focused on the renormalized solutions and entropy solutions to the variable-order fractional $p$-Laplacian equation with Dirichlet boundary condition and nonnegative $L^1$-data. 
It can be characterized as $\operatorname{div}\left(|\nabla u|^{p-2} \nabla u\right)$, which is associated with the variable-order Sobolev space.
To the best of our knowledge, there is currently no research addressing nonlinear evolution equations that involve variable-order and $L^1$-data.

In the mathematical research, for the case where $f \in L^2(\Omega)$, it is possible to demonstrate the well-posedness of weak solutions to the problem \eqref{Quotient} within a suitable Sobolev space. These solutions are shown to satisfy the equations in the sense of distributions.
Nonetheless, several studies concentrate on $L^1$-data. For integrable data \cite{ref16}, there may not exist such a weak solution due to less regularity.
For the problem \eqref{Quotient}, where $f \in L^1(\Omega)$ \cite{ref23}, it is necessary to define the "solution" in a weak sense after some form of renormalization, thereby defining the renormalized solution.
The notion of renormalized solutions was first presented by DiPerna and Lions in their study \cite{ref30} of the Boltzmann equation.
It is well known that the existence and uniqueness of the renormalized solutions were established by Alibaud in \cite{ref28}, and can be demonstrated through the following questions:
\begin{equation*}
    \beta(u)+(-\Delta)^{s} u \ni f \text { in } \mathbb{R}^{N},
\end{equation*}
where $f\in L^1(\mathbb{R}^{N})$ and $\beta$ is a maximal monotone graph in $\mathbb{R}^{N}$.
Lions et al.\cite{ref29} applied this concept to the nonlinear elliptic problems with Dirichlet boundary conditions.
In \cite{ref17} such a notion has been extended to the nonlinear parabolic problems even with variable exponent Laplacian term \cite{ref26} and a reaction-diffusion system \cite{ref34}.
Meanwhile, the concept of entropy solutions was introduced in \cite{ref5} for the nonlinear elliptic problems.
Abdellaoui et al. investigated the existence and uniqueness of nonnegative entropy solutions in \cite{ref6} for fractional $p$-Laplacian equations with weight and general data.
Additionally, it mentioned the existence of approximate limit solutions (distribution solutions) and weak solutions \cite{ref2,ref21}.
At the same time, the study of entropy solutions was also adapted to general nonlinear elliptic problems by Leone and Porretta in \cite{ref1,ref9}.
Renormalized (entropy) solutions is more suitable than weak solutions for working with nonlinear equations and measuring data due to less regularity.

This article aims to broaden the findings presented in \cite{ref33} to a class of parabolic equations, and provide theoretical analysis.
The study first characterizes the definition and establishes the existence and uniqueness of weak solutions by using Nonlinear Semigroup Theory. Subsequently, we develop an approximate sequence of solutions and derive several priori estimates. Following this, the thesis describes a subsequence to attain a limit function and demonstrates that this function is a renormalized solution by the strong convergence of the truncations.
By selecting suitable test functions, we prove the uniqueness of renormalized and entropy solutions.
In contrast to earlier studies, our research contributes variable-order Laplacian studies for $L^1$-data, at the same time it is complicated for Dirichlet boundary conditions. The proof of the weak solutions is also unprecedented.

The paper is structured as follows. Section 2 reviews some definitions and fundamental properties of variable-order fractional Sobolev spaces. It also provides some notations and main results concerning the renormalized and entropy solutions to the problem \eqref{Quotient}. Section 3 presents the well-posedness of weak solutions for the approximate problem \eqref{Eqn4} associated with \eqref{Quotient}. Section 4 provides detailed proof of the main results, including existence, uniqueness and comparison principles. 

\section{Mathematical preliminaries and main result}

In this section, we recall some definitions and basic properties of variable-order fractional Sobolev spaces at first. 

Suppose that $\Omega\subset\mathbb{R}^N$ is a bounded domain, $1<p<+\infty,\;s\left(x,y\right):\mathbb{R}^{N}\times\mathbb{R}^{N}\to(0,1)$ is a symmetric continuous function satisfies
\begin{equation*}
    0<s^-=\inf_{(x,y)\in\mathbb{R}^N\times\mathbb{R}^N}s(x,y)\leq s(x,y)\leq s^+=\sup_{(x,y)\in\mathbb{R}^N\times\mathbb{R}^N}s(x,y)<1.
\end{equation*}
The Gagliardo seminorm of a measurable function $u$ in ${\mathbb{R}^{N}}$ with variable order follows
\begin{equation*}
    [u]_{W^{s(\cdot,\cdot),p}(\mathbb{R}^{N})}    =\left(\int_{\mathbb{R}^{N}}\int_{\mathbb{R}^{N}}\dfrac{|u(x)-u(y)|^p}
    {|x-y|^{N+p{s\left(x,y\right)}}}dxdy\right)
    ^{\frac{1}{p}},
\end{equation*}   
the variable-order fractional Sobolev-Slobodeckij spaces is defined as a Banach space
\begin{equation*}
    W^{s(\cdot,\cdot),p}(\mathbb{R}^{N})=\left\{u\in L^p(\mathbb{R}^{N}):\left(\int_{\mathbb{R}^{N}}\int_{\mathbb{R}^{N}}\dfrac{|u(x)-u(y)|^p}
    {|x-y|^{N+p{s\left(x,y\right)}}}dxdy\right)
    ^{\frac{1}{p}}<+\infty\right\},
\end{equation*}
characterized by a norm denoted as 
\begin{equation*}
    \|u\|_{W^{s(\cdot,\cdot),p}(\mathbb{R}^{N})}=\|u\|_{L^p(\mathbb{R}^{N})}+[u]_{W^{s(\cdot,\cdot),p}(\mathbb{R}^{N})}.
\end{equation*}
Consequently, $W^{s(\cdot,\cdot),p}(\Omega)$ is separable and reflexive.

Denote $D_\Omega=(\mathbb{R}^{N}\times\mathbb{R}^{N})\backslash(\Omega^c\times \Omega^c)$, where $\Omega^c=\mathbb{R}^N\backslash\Omega$.
The linear space $X^{s(\cdot,\cdot),p}(\Omega)$ of Lebesgue measurable functions $u:\mathbb{R}^N\to\mathbb{R}$ is denoted as
\begin{equation*}
    \left\{u\in L^p(\mathbb{R}^{N}):\left(\int_\Omega|u(x)|^p dx
    +\iint_{D_\Omega}
    \frac{|u(x)-u(y)|^p}{|x-y|^{N+p{s(x,y)}}}
    dxdy\right)^\frac{1}{p}<+\infty\right\}.
\end{equation*}
It is evident that both bounded and Lipschitz functions are included in the space $X^{s(\cdot,\cdot),p}(\Omega)$, therefore $X^{s(\cdot,\cdot),p}(\Omega)$ is not trivial, and
\begin{equation*}
    X_0^{s(\cdot,\cdot),p}(\Omega)=\left\{u\in X^{s(\cdot,\cdot),p}(\Omega):u=0\;a.e.\;\text{in}\;\Omega^c\right\}.
\end{equation*}

Next, for the convenience of the readers, we present several lemmas that can be proved by following the arguments given in Lemma 6.1 \cite{ref19}.
\begin{lemma}\label{L1}
Let $\Omega\subset\mathbb{R}^N$ be a bounded domain, and $m(\Omega)\subset\mathbb{R}^{N}$ be a measurable set with finite measure. Fix $x\in\mathbb{R}^{N}$, let $p>1,\;s(x,y)\in(0,1)$, there holds
\begin{equation*}
    \int_{\Omega^c}\frac{1}{|x-y|^{N+p{s\left(x,y\right)}}}dy
    \geq c\cdot m(\Omega)^{-\frac{p{s^+}}{N}}
\end{equation*} 
for a suitable constant $c=c(N, p, s^+)>0$. 
\end{lemma}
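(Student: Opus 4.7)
The plan is a symmetrization-type argument: compare $\Omega$ with a concentric ball of doubled measure, on which the integrand can be controlled explicitly. Let $\omega_N$ denote the measure of the unit ball in $\mathbb{R}^N$, choose $R=(2\,m(\Omega)/\omega_N)^{1/N}$ so that $|B_R(x)|=2\,m(\Omega)$, and set $A=B_R(x)\cap\Omega^c$. Since $|B_R(x)\cap\Omega|\leq m(\Omega)$, we get $|A|\geq m(\Omega)$, i.e., a chunk of $\Omega^c$ of measure at least $m(\Omega)$ sits within distance $R$ of $x$, exactly where the singular kernel is largest.

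On $A$ I would then lower-bound the integrand in two monotonicity steps. First, because $r\mapsto r^{-N-ps(x,y)}$ is decreasing on $(0,\infty)$, the bound $|x-y|\leq R$ for $y\in A$ yields $|x-y|^{-N-ps(x,y)}\geq R^{-N-ps(x,y)}$. Second, provided $R\geq 1$ the map $\alpha\mapsto R^{-\alpha}$ is decreasing, so $s(x,y)\leq s^+$ gives $R^{-N-ps(x,y)}\geq R^{-N-ps^+}$. Integrating over $A$ and plugging in $R^N=2\,m(\Omega)/\omega_N$ yields
\begin{equation*}
\int_{\Omega^c}\frac{dy}{|x-y|^{N+ps(x,y)}}\ \geq\ |A|\,R^{-N-ps^+}\ \geq\ m(\Omega)\,R^{-N-ps^+}\ =\ \frac{\omega_N}{2}\Big(\frac{2}{\omega_N}\Big)^{ps^+/N}m(\Omega)^{-ps^+/N},
\end{equation*}
which is the claim with $c=c(N,p,s^+)$.

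The main obstacle is that the replacement $R^{-N-ps(x,y)}\geq R^{-N-ps^+}$ genuinely requires $R\geq 1$, equivalently $m(\Omega)\geq\omega_N/2$: for $R<1$ the map $\alpha\mapsto R^{-\alpha}$ is increasing, and the inequality reverses. To cover small $m(\Omega)$ I would redo the estimate on the enlarged ball $B_1(x)$, noting that $B_1(x)\cap\Omega^c$ has measure at least $\omega_N-m(\Omega)$; the contribution from $\Omega^c\cap\{|x-y|\geq 1\}$ still satisfies the constant-exponent bound $|x-y|^{-N-ps(x,y)}\geq |x-y|^{-N-ps^+}$ (valid there), and a direct spherical-coordinate computation of the far-field portion produces a constant that can be absorbed into $c(N,p,s^+)$. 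This case split is the only genuinely delicate point; the rest follows the blueprint of Lemma 6.1 in \cite{ref19}, adapted to accommodate the spatial dependence of $s(x,y)$.
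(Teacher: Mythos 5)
Your case $R\geq 1$ is correct and is in substance the paper's own argument: the paper splits $\Omega^c$ at the radius $\rho$ with $|B_\rho(x)|=m(\Omega)$ and uses the identity $m(\Omega^c\cap B_\rho(x))=m(\Omega\cap B_\rho^c(x))$ to reassemble the full integral $\int_{B_\rho^c(x)}|x-y|^{-N-ps^+}dy$, whereas you double the measure of the ball so that $|B_R(x)\cap\Omega^c|\geq m(\Omega)$ and keep only the near-field piece; both routes reduce to the same constant-order computation. More importantly, you have put your finger on exactly the step the paper performs silently: replacing $s(x,y)$ by $s^+$ in $\rho^{-N-ps(x,y)}$, and in $|x-y|^{-N-ps(x,y)}$ on $B_\rho^c(x)$, is legitimate only where the base is at least $1$. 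The paper makes both substitutions with no hypothesis on $\rho$, so your diagnosis of the delicate point is accurate and is not addressed there.

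However, your patch for the case $m(\Omega)<\omega_N/2$ does not close the gap, and cannot. The far-field contribution $\int_{\Omega^c\cap\{|x-y|\geq 1\}}|x-y|^{-N-ps^+}dy$ is at most $N\omega_N/(ps^+)$, a constant; in the near field $\{|x-y|<1\}$ the map $\alpha\mapsto r^{-\alpha}$ is \emph{increasing} for $r<1$, so the pointwise substitution goes the wrong way and the best uniform bound available there uses $s^-$, yielding only $c\,m(\Omega)^{-ps^-/N}$. Both quantities are $o\big(m(\Omega)^{-ps^+/N}\big)$ as $m(\Omega)\to 0$, so no combination of these ingredients recovers the asserted rate. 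In fact the statement itself fails for small domains when $s$ is not constant near the diagonal: take $N=2$, $p=2$, $s(x,y)=\sigma(|x-y|)$ with $\sigma\equiv 1/4$ on $[0,1]$ and $\sigma\equiv 1/2$ on $[2,\infty)$, and $\Omega=B_\rho(x)$; then the integral equals $4\pi\rho^{-1/2}+O(1)$ while the claimed lower bound is of order $\rho^{-1}$. What your two-case argument does prove is $\int_{\Omega^c}|x-y|^{-N-ps(x,y)}dy\geq c(N,p,s^+)\min\{1,\,m(\Omega)^{-ps^+/N}\}$, a positive lower bound uniform in $x\in\Omega$, and that is all that is actually used downstream in Lemma \ref{L2}. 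I would state and prove that weaker inequality (or the version with $s^-$ in the small-measure regime) rather than try to rescue the $s^+$ rate.
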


\begin{proof}
Observe that $\displaystyle\rho=\left(\frac{m(\Omega)}{\omega_n}\right)^{\frac{1}{n}}$ and 
$m\big(\Omega^c\cap B_{\rho}(x)\big)=m\big(\Omega\cap B_{\rho}^c(x)\big)$, then
\begin{equation*}
\begin{split}
    \int_{\Omega^c}\frac{1}{|x-y|^{N+p{s\left(x,y\right)}}}dy
    &=\int_{\Omega^c \cap B_{\rho}(x)}
    \frac{1}{|x-y|^{N+p{s\left(x,y\right)}}}dy
    +\int_{\Omega^c\cap B_{\rho}^c(x)}
    \frac{1}{|x-y|^{N+p{s\left(x,y\right)}}}dy\\
    &\geq\int_{\Omega^c\cap B_{\rho}(x)}
    \frac{1}{\rho^{N+p{s\left(x,y\right)}}}dy
    +\int_{\Omega^c\cap B_{\rho}^c(x)}
    \frac{1}{|x-y|^{N+p{s\left(x,y\right)}}}dy\\
    &\geq\frac{m(\Omega^c\cap B_{\rho}(x))}{\rho^{N+p{s^+}}}+
    \int_{\Omega^c\cap B_{\rho}^c(x)}
    \frac{1}{|x-y|^{N+p{s^+}}}dy\\
    &=\frac{m(\Omega\cap B_{\rho}^c(x))}{\rho^{N+p{s^+}}}
    +\int_{\Omega^c\cap B_{\rho}^c(x)}
    \frac{1}{|x-y|^{N+p{s^+}}}dy\\
    &\geq\int_{\Omega\cap B_{\rho}^c(x)}
    \frac{1}{|x-y|^{N+p{s^+}}}dy
    +\int_{\Omega^c\cap B_{\rho}^c(x)}
    \frac{1}{|x-y|^{N+p{s^+}}}dy\\
    &=\int_{B_{\rho}^c(x)}\frac{1}{|x-y|^{N+p{s^+}}}dy. 
\end{split}
\end{equation*}
Therefore 
\begin{equation*}
    \int_{\Omega^c}\frac{1}{|x-y|^{N+p{s\left(x,y\right)}}}dy
    \geq c\cdot m(\Omega)^{-\frac{p{s^+}}{N}}.
\end{equation*}
\end{proof}

\begin{lemma}\label{L2}
Let $\Omega\subset\mathbb{R}^N$ be a bounded domain, for every function $u\in X_{0}^{s(\cdot,\cdot),p}(\Omega)$, then
\begin{equation*}
    \int_\Omega|u(x)|^p dx
    \leq C\iint_{D_\Omega}\frac{|u(x)-u(y)|^p}{|x-y|^{N+p{s\left(x,y\right)}}} dxdy
\end{equation*} 
holds for a positive constant $C=C(N,p,s^+,\Omega)$. 
\end{lemma}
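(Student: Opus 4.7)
The plan is to reduce this Poincaré-type inequality to a direct consequence of Lemma \ref{L1}, exploiting the fact that functions in $X_0^{s(\cdot,\cdot),p}(\Omega)$ vanish a.e.\ on $\Omega^c$. The key observation is that $D_\Omega$ decomposes as $(\Omega\times\Omega)\cup(\Omega\times\Omega^c)\cup(\Omega^c\times\Omega)$, and the cross terms $\Omega\times\Omega^c$ will carry all the information we need.

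First, I would drop the contribution coming from $\Omega\times\Omega$ (which is nonnegative) and restrict the double integral on the right-hand side to the region $\Omega\times\Omega^c$. Since $u\equiv 0$ a.e.\ on $\Omega^c$, on this region $|u(x)-u(y)|^p=|u(x)|^p$. After applying Fubini and pulling $|u(x)|^p$ out of the inner integral, I obtain
\begin{equation*}
\iint_{D_\Omega}\frac{|u(x)-u(y)|^p}{|x-y|^{N+ps(x,y)}}\,dx\,dy
\;\geq\;\int_\Omega |u(x)|^p\left(\int_{\Omega^c}\frac{1}{|x-y|^{N+ps(x,y)}}\,dy\right)dx.
\end{equation*}

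Next, Lemma \ref{L1} gives a uniform (in $x\in\Omega$) lower bound $\int_{\Omega^c}|x-y|^{-N-ps(x,y)}\,dy\geq c\,m(\Omega)^{-ps^+/N}$, with $c=c(N,p,s^+)>0$. Plugging this into the previous inequality yields
\begin{equation*}
\iint_{D_\Omega}\frac{|u(x)-u(y)|^p}{|x-y|^{N+ps(x,y)}}\,dx\,dy
\;\geq\;c\,m(\Omega)^{-ps^+/N}\int_\Omega |u(x)|^p\,dx,
\end{equation*}
which is exactly the claimed estimate with $C=C(N,p,s^+,\Omega):=c^{-1}m(\Omega)^{ps^+/N}$.

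There is no real obstacle here, the result is essentially just an unpacking of Lemma \ref{L1} combined with the vanishing of $u$ outside $\Omega$; the only thing to be careful about is making sure one integrates over a subset of $D_\Omega$ (and not, for example, all of $\mathbb{R}^N\times\mathbb{R}^N$) so that the constant $C$ depends only on the advertised quantities. Optionally, one can also use the symmetric cross term $\Omega^c\times\Omega$ to absorb a factor of $2$, but this only improves the constant and is not required.
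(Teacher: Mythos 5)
Your proposal is correct and follows essentially the same route as the paper: both decompose the integral over $D_\Omega$, use the vanishing of $u$ on $\Omega^c$ to turn the cross term $\Omega\times\Omega^c$ into $\int_\Omega|u(x)|^p\int_{\Omega^c}|x-y|^{-N-ps(x,y)}\,dy\,dx$, and then invoke Lemma \ref{L1}. The only cosmetic difference is that the paper keeps both cross terms (gaining a factor of $2$) and subtracts the $\Omega\times\Omega$ contribution rather than simply discarding it, which does not change the substance of the argument.
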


\begin{proof}
\begin{equation*}
\begin{split}
       &\int_{\mathbb{R}^{N}}\int_{\mathbb{R}^{N}}
       \frac{|u(x)-u(y)|^{p}}{|x-y|^{N+p{s\left(x,y\right)}}} dxdy\\       &=\int_{\Omega\cup{\Omega^c}}\left(\int_{\Omega}\frac{|u(x)-u(y)|^{p}}{|x-y|^{N+p{s\left(x,y\right)}}}dy
       +\int_{\Omega^c}\frac{|u(x)-u(y)|^{p}}{|x-y|^{N+p{s\left(x,y\right)}}}dy\right)dx\\
       &=\int_{\Omega}\int_{\Omega}\frac{|u(x)-u(y)|^{p}}{|x-y|^{N+p{s\left(x,y\right)}}} dxdy       +2\int_{\Omega}|u(x)|^{p}dx\int_{\Omega^c}\frac{1}{|x-y|^{N+p{s\left(x,y\right)}}} dy.    
\end{split}
\end{equation*}
According to the Lemma \ref{L1}, we have
\begin{equation*}
    \int_{\mathbb{R}^{N}}\int_{\mathbb{R}^{N}}
    \frac{|u(x)-u(y)|^{p}}{|x-y|^{N+p{s\left(x,y\right)}}} dxdy
    -\int_{\Omega}\int_{\Omega}\frac{|u(x)-u(y)|^{p}}{|x-y|^{N+p{s\left(x,y\right)}}} dxdy     
    \geq 2c\int_\Omega|u(x)|^p dx
\end{equation*}
i.e. for any $p>1$, there exists a positive constant $C=C(N,p,s^+,\Omega)$
\begin{equation}\label{Eqn2}
    \int_\Omega|u(x)|^p dx
    \leq C\iint_{D_\Omega}\frac{|u(x)-u(y)|^p}{|x-y|^{N+p{s\left(x,y\right)}}} dxdy.
\end{equation}
\end{proof}
Therefore, for any $u\in X_{0}^{s(\cdot,\cdot),p}(\Omega)$
\begin{equation*}
    \iint_{D_\Omega}\frac{|u(x)-u(y)|^p}{|x-y|^{N+p{s\left(x,y\right)}}} dxdy
    \leq\|u\|_{W^{s(\cdot,\cdot),p}(\mathbb{R}^{N})}^p
    \leq C\iint_{D_\Omega}\frac{|u(x)-u(y)|^p}{|x-y|^{N+p{s\left(x,y\right)}}} dxdy, 
\end{equation*}
we can endow $X_{0}^{s(\cdot,\cdot),p}(\Omega)$ with the equivalent norm
\begin{equation*}
    \left\|u\right\|_{X_0^{s(\cdot,\cdot),p}(\Omega)}=\left(\iint_{D_\Omega}\frac{|u(x)-u(y)|^p}{|x-y|        ^{N+p{s\left(x,y\right)}}}dxdy\right)^{\frac{1}{p}}.
\end{equation*}
Note that $X_{0}^{s(\cdot,\cdot),p}(\Omega)$ is a uniformly convex and reflexive Banach space. 
The space $X_{0}^{s(\cdot,\cdot),p}(\Omega)$ is defined as the closure of $C_0^\infty(\Omega)$ with respect to the function space $X^{s(\cdot,\cdot),p}(\Omega)$. It is noted that $X_{0}^{s(\cdot,\cdot),p}(\Omega)$ is not equivalent to the standard fractional Sobolev space $W_{0}^{s(\cdot,\cdot),p}(\Omega)$.

For $u\in W^{s(\cdot,\cdot),p}(\mathbb{R}^N)$, we define the variable-order fractional $p$-Laplacian as
\begin{equation*}
    (-\Delta)_p^{s(\cdot,\cdot)}u(x)        :=\text{P.V.}\int_{\mathbb{R}^N}\frac{|u(x)-u(y)|^{p-2}
    \big(u(x)-u(y)\big)}{|x-y|^{N+p{s(x,y)}}}dy,
\end{equation*}
for all $u,v\in W^{s(\cdot,\cdot),p}(\mathbb{R}^N)$, then
\begin{equation*}
    \langle(-\Delta)_{p}^{s(\cdot,\cdot)}u,v\rangle
    =\frac{1}{2}\iint_{\mathbb{R}^{N}\times\mathbb{R}^{N}}\frac{|u(x)-u(y)|^{p-2}\big(u(x)-u(y)\big)}{|x-y|^{N+p{s(x,y)}}}\cdot\big(v(x)-v(y)\big)dxdy. 
\end{equation*}
Consequently, if $u,v\in X^{s(\cdot,\cdot),p}(\mathbb{R}^N)$
\begin{equation*}
    \langle(-\Delta)_{p}^{s(\cdot,\cdot)}u,v\rangle
    =\frac{1}{2}\iint_{D_\Omega}\frac{|u(x)-u(y)|^{p-2}\big(u(x)-u(y)\big)}{|x-y|^{N+p{s\left(x,y\right)}}}\cdot\big(v(x)-v(y)\big)dxdy. 
\end{equation*}
Herein $(-\Delta)_p^{s(\cdot,\cdot)}:X_0^{s(\cdot,\cdot),p}(\Omega)\to X_0^{s(\cdot,\cdot),p}(\Omega)^*$ where $X_0^{s(\cdot,\cdot),p}(\Omega)^*$ is the dual space of $X_0^{s(\cdot,\cdot),p}(\Omega)$. 
The corresponding parabolic space $L^p(0,T;X_0^{s(\cdot,\cdot),p}(\Omega))$ is defined as 
\begin{equation*}
    L^p(0,T;X_0^{s(\cdot,\cdot),p}(\Omega))=\left\{u\in L^{p}(\Omega_{T}):\|u\|_{L^{p}(0,T;X_{0}^{s(\cdot,\cdot),p}(\Omega))}<+\infty\right\}, 
\end{equation*}
with the norm
\begin{equation*}   
    \left\|u\right\|_{L^p(0,T;X_0^{s(\cdot,\cdot),p}(\Omega))}=\left(\int_0^T\iint_{D_\Omega}\frac{|u(x,t)-u(y,t)|^p}{|x-y|^{N+p{s\left(x,y\right)}}}dxdydt\right)^{\frac{1}{p}}.
\end{equation*}
$L^{p}(0,T;X_{0}^{s(\cdot,\cdot),p}(\Omega))$ is a Banach space whose dual space is $L^{p'}(0,T;X_0^{s(\cdot,\cdot),p}(\Omega)^*)$.

Let us denote
\begin{align*}
    &\widetilde{u}(x,y,t)=u(x,t)-u(y,t),\\
    &d\sigma=\frac{1}{|x-y|        ^{N+p{s\left(x,y\right)}}}dxdy.
\end{align*}
Define $T_k$ is the truncation function for $k\geq0$:
\begin{equation*}
    T_k(r)=\min\big\{k,\max\{r,-k\}\big\}=\begin{cases}k\quad&\text{if}\;r\geq k,\\r\quad&\text{if}\;|r|<k,\\-k\quad&\text{if}\;r\leq-k,\end{cases}  
\end{equation*}
and its primitive $\Theta_k:\mathbb{R}\to\mathbb{R}^+$ donated by
\begin{equation*}
    \Theta_k(r)=\int_0^rT_k(s) ds=\begin{cases} \displaystyle\frac{r^2}{2}\quad&\text{if}\;|r|\le k,\\ \displaystyle k|r|-\frac{k^2}{2}\quad&\text{if}\;|r|\ge k,\end{cases}
\end{equation*}
It is easy to obtain $0\leq\Theta_{k}(r)\leq k|r|$. 
Besides
\begin{equation*}
\begin{split}
    \mathcal{T}_{0}^{s(\cdot,\cdot), p}(\Omega_{T})
    &=\big\{u:\mathbb{R}^{N}\times(0,T]\to \mathbb{R}\;
    \text{is measurable and}\;\\
    &\qquad T_k(u)\in L^p(0,T;X_0^{s(\cdot,\cdot),p}(\Omega))\;
    \text{for every}\;k>0\big\}.
\end{split}
\end{equation*}  

Next, we provide the definitions of renormalized solutions and entropy solutions. 

\begin{definition}\label{D1} 
A function $u\in\mathcal{T}_0^{s(\cdot,\cdot),p}(\Omega_T)\cap C([0,T];L^1(\Omega))$ is said to be a renormalized solution to the problem \eqref{Quotient}, if the following conditions hold:\\
(1) For $D_{h}=\left\{(u,v)\in\mathbb{R}^{2}:\mathrm{min}\{|u|,|v|\}\leq h\;\text{and}\;\mathrm{max}\{|u|,|v|\}\geq h+1\;\text{or}\;uv<0\right\}$,  
\begin{equation*}
    \lim_{h\to\infty}\iiint_{\{(x,y,t):(u(x,t),u(y,t))\in D_{h}\}}
    |\widetilde{u}(x,y,t)|^{p-1} d\sigma dt=0. 
\end{equation*}            
(2) For every function $\phi\in C^1(\bar{\Omega}_T)$ with $\phi=0$ in
$\Omega^c\times(0,T)$ and $\phi(\cdot,T)=0$ in $\Omega$, 
and $H\in W^{1,\infty}(\mathbb{R})$ which is piecewise $C^1$ satisfying that $H'$ has a compact support
\begin{equation*}
\begin{split}
    &-\int_{0}^{T} \int_{\Omega}H(u)\frac{\partial\phi}{\partial t} dxdt 
    -\int_{\Omega}H(u_{0})\phi(x,0) dx\\
    &+\frac{1}{2}\int_{0}^{T}\int_{D_\Omega}|\widetilde{u}(x,y,t)|^{p-2}\widetilde{u}(x,y,t)\Big[\big(H^{\prime}(u)\phi\big)(x,t)-\big(H^{\prime}(u)\phi\big)(y,t)\Big] d\sigma dt\\
    &=\int_{0}^{T}\int_{\Omega}f\big(H^{\prime}(u)\phi\big) dxdt.                 
\end{split}
\end{equation*} 
\end{definition}

\begin{remark}
$(H'(u)\phi)(x,t)-(H'(u)\phi)(y,t)$ is symmetric, i.e.
\begin{equation*}
\begin{split}      
    \big(H'(u)\phi\big)(x,t)-\big(H'(u)\phi\big)(y,t)
        &=\big(H'(u)(x,t)-H'(u)(y,t)\big)\cdot\frac{\phi(x,t)+\phi(y,t)}2\\
        &+\frac{H'(u)(x,t)+H'(u)(y,t)}2\cdot\big(\phi(x,t)-\phi(y,t)\big).    
\end{split}  
\end{equation*}
\end{remark}

\begin{definition}\label{D2}  
A function $u\in\mathcal{T}_0^{s(\cdot,\cdot),p}(\Omega_T)\cap C([0,T];L^1(\Omega))$ is said to be an entropy solution to the problem \eqref{Quotient}, if $u$ satisfies:
\begin{equation*}
\begin{split}
    &\int_{\Omega}\Theta_{k}(u-\phi)(T) dx-\int_{\Omega}\Theta_{k}(u_{0}-\phi)(0) dx
    +\int_{0}^{T}\int_{\Omega}\phi_{t}T_{k}(u-\phi)dxdt\\
    &+\frac{1}{2}\int_{0}^{T}\int_{D_\Omega}|\widetilde{u}(x,y,t)|^{p-2}\widetilde{u}(x,y,t)\\
    &\cdot\big[\big(T_k(u)-\phi)(x,t)-\big(T_k(u)-\phi\big)(y,t)\big]d\sigma dt \\
    &\leq\int_0^T\int_\Omega fT_k(u-\phi) dxdt.    
\end{split}
\end{equation*} 
\end{definition}

\begin{theorem}\label{result1}
There exists a measurable function $u$ such that it is a unique renormalized solution to problem \eqref{Quotient}
in the sense of Definition \ref{D1} if $f \in L^1(\Omega_T)$ and $ u_0 \in L^1(\Omega)$. 
\end{theorem}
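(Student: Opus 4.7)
The plan is to follow the standard approximation scheme for $L^1$ data, adapted to the variable-order fractional framework. First I would approximate the data by $f_n = T_n(f) \in L^\infty(\Omega_T) \cap L^1(\Omega_T)$ and $u_{0,n} = T_n(u_0) \in L^\infty(\Omega) \cap L^1(\Omega)$, with strong $L^1$ convergence to $f$ and $u_0$. The well-posedness of weak solutions established in Section 3 then furnishes a sequence $\{u_n\}$ of weak solutions to \eqref{Quotient} with data $(f_n, u_{0,n})$. The first task is to derive a priori estimates uniform in $n$: testing with $T_k(u_n)$ and integrating in time yields $\|T_k(u_n)\|^p_{L^p(0,T;X_0^{s(\cdot,\cdot),p}(\Omega))} \le C k \bigl(\|f\|_{L^1} + \|u_0\|_{L^1}\bigr)$, from which I also extract Marcinkiewicz-type control on the super-level sets $\{|u_n| > k\}$.

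Next I would show that $\{u_n\}$ is Cauchy in $C([0,T];L^1(\Omega))$. This is where the comparison principle for weak solutions (proved in Section 4) does the decisive work: using $T_k(u_n - u_m)$ as a test function in the difference equation and exploiting monotonicity of the variable-order operator gives $\|u_n - u_m\|_{C([0,T];L^1(\Omega))} \to 0$. Hence there is a limit $u \in C([0,T];L^1(\Omega))$ with $u_n \to u$ a.e., and combined with the truncation bound, $T_k(u_n) \rightharpoonup T_k(u)$ weakly in $L^p(0,T;X_0^{s(\cdot,\cdot),p}(\Omega))$ for every $k > 0$, while $|\widetilde{u}_n|^{p-1}$ stays bounded in an appropriate Marcinkiewicz class on $D_\Omega \times (0,T)$ with respect to $d\sigma\,dt$.

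The main obstacle is the strong convergence of truncations $T_k(u_n) \to T_k(u)$ in $L^p(0,T;X_0^{s(\cdot,\cdot),p}(\Omega))$, which is needed both to pass to the limit in the nonlocal nonlinear term and to verify condition (1) of Definition \ref{D1}. My plan is a Minty-style argument: plug $T_k(u_n - T_h(u))$ with $h > k$ into the equation for $u_n$, use the pointwise monotonicity inequality
\begin{equation*}
    \bigl(|a|^{p-2}a - |b|^{p-2}b\bigr)(a-b) \ge 0,
\end{equation*}
to isolate the diagonal contribution on $\{|u_n| \le k\}$, and control the remainder through the tail estimate obtained by taking $T_1(u_n - T_h(u_n))$ as test function and sending $h \to \infty$. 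That tail analysis is exactly what produces the decay on the sets $D_h$, so the two arguments reinforce each other. With strong convergence in hand, I can pass to the limit in the renormalized formulation by choosing $H$ with $\operatorname{supp} H' \subset [-k,k]$, so that the nonlocal term only involves $T_{k+1}(u_n)$ and $T_{k+1}(u)$.

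For uniqueness, I would take two renormalized solutions $u_1, u_2$ and use an admissible approximation of $T_k(u_1 - u_2)$ as a test pair, combined with a Landes-type time mollification to absorb the time derivative. Monotonicity of the operator handles the nonlocal bilinear term, while condition (1) in Definition \ref{D1} controls the truncation error as $h \to \infty$. Letting $k \to \infty$ and applying a Gronwall argument in time then forces $u_1 \equiv u_2$ in $C([0,T];L^1(\Omega))$.
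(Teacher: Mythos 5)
Your proposal follows essentially the same route as the paper: truncated data $f_n=T_n(f)$, $u_{0n}=T_n(u_0)$, weak solutions from Section 3, the Cauchy estimate in $C([0,T];L^1(\Omega))$ via a bounded truncation of $u_n-u_m$, the a priori bound $\|T_k(u_n)\|^p_{L^p(0,T;X_0^{s(\cdot,\cdot),p}(\Omega))}\le Ck(\|f\|_{L^1}+\|u_0\|_{L^1})$, strong convergence of truncations by monotonicity plus tail decay, passage to the limit with $\operatorname{supp}H'$ compact, and uniqueness by double truncation. One step as you have written it would not go through: in the Minty argument you test the equation for $u_n$ directly with $T_k(u_n-T_h(u))$, but $u$ is only in $\mathcal{T}_0^{s(\cdot,\cdot),p}(\Omega_T)\cap C([0,T];L^1(\Omega))$, so the time-derivative pairing $\int_0^T\int_\Omega (u_n)_t\,T_k(u_n-T_h(u))\,dx\,dt$ can be neither integrated by parts nor sign-controlled, since $T_h(u)$ has no time regularity. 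The paper resolves exactly this by replacing $T_k(u)$ with the time regularization $\varrho_{\epsilon,\iota}(u)=(T_k(u))_\epsilon+e^{-\epsilon t}T_k(\phi_\iota)$, which satisfies $(\varrho_{\epsilon,\iota}(u))_t=\epsilon(T_k(u)-\varrho_{\epsilon,\iota}(u))$ and has initial value $T_k(\phi_\iota)$, and by testing with $T_{2k}\big(u_n-T_h(u_n)+T_k(u_n)-\varrho_{\epsilon,\iota}(u)\big)$ before sending $n$, $\epsilon$, $\iota$, $h$ to their limits in that order; you invoke a Landes-type mollification only in the uniqueness part, whereas it is in the truncation-convergence step that it is indispensable. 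Your uniqueness sketch otherwise matches the paper's (test with $H_h'(u)T_k(H_h(u)-H_h(v))$, use monotonicity for the good term and condition (1) of Definition \ref{D1} to kill the commutator terms as $h\to\infty$); the concluding Gronwall step you mention is not needed once the monotone term is shown to vanish.
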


\begin{theorem}\label{result2}
There exists a measurable function $u$ such that it is a unique entropy solution to problem \eqref{Quotient}
in the sense of Definition \ref{D2} if $f \in L^1(\Omega_T)$ and $ u_0 \in L^1(\Omega)$. 
\end{theorem}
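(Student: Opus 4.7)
The plan is to prove Theorem~\ref{result2} in two parts: existence by passing to the limit in the entropy equality satisfied by approximating weak solutions, and uniqueness via a direct comparison argument using Landes-type time regularization.

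For \textbf{existence}, I would start from the approximating weak solutions $u_n$ of Section~3, corresponding to bounded data $f_n\to f$ in $L^1(\Omega_T)$ and $u_{0,n}\to u_0$ in $L^1(\Omega)$. Because $u_n$ has enough regularity, I may test the weak equation against $T_k(u_n-\phi)$ for any admissible $\phi\in C^1(\bar\Omega_T)$ with $\phi=0$ on $\Omega^c\times(0,T)$. Using the chain-rule identity $\partial_t\Theta_k(u_n-\phi)=T_k(u_n-\phi)\,(\partial_t u_n-\phi_t)$ and integrating on $(0,T)$ produces the entropy equality at level $n$. Passing $n\to\infty$: the $\Theta_k$ boundary terms converge by the a.e.\ convergence $u_n\to u$ from Theorem~\ref{result1} together with the bound $0\leq\Theta_k(r)\leq k|r|$; the $\phi_t$ term converges by dominated convergence; the right-hand side is handled by the strong convergence $f_n\to f$ in $L^1(\Omega_T)$ and the a.e.\ convergence of $T_k(u_n-\phi)$; the nonlocal fractional term converges to a quantity bounded from above by the corresponding entropy expression, thanks to the weak convergence of $T_k(u_n)$ in $L^p(0,T;X_0^{s(\cdot,\cdot),p}(\Omega))$ and lower semicontinuity of the associated convex functional. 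The inequality (rather than equality) in Definition~\ref{D2} emerges precisely from this lower-semicontinuity step, while the uniform bound on $T_k(u_n)$ in $L^p(0,T;X_0^{s(\cdot,\cdot),p}(\Omega))$ follows from Lemma~\ref{L2} applied in the proof of Theorem~\ref{result1}.

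For \textbf{uniqueness}, let $u$ and $v$ be two entropy solutions. I would use the Landes time regularization $w_\mu$ defined by $\partial_t w_\mu+\mu w_\mu=\mu w$, $w_\mu(0)=0$, applied to $T_h(u)$ and $T_h(v)$, in order to manufacture admissible test functions. Substituting $\phi=(T_h(v))_\mu$ into the entropy inequality for $u$ and $\phi=(T_h(u))_\mu$ into that for $v$, summing the two, and sending $\mu\to\infty$ produces an estimate of the form
\begin{equation*}
   \int_\Omega \Theta_k(u-v)(T)\,dx + \mathcal{J}_{h,k} \leq 0,
\end{equation*}
where $\mathcal{J}_{h,k}$ collects the nonlocal cross terms. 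The monotonicity inequality $(|a|^{p-2}a-|b|^{p-2}b)(a-b)\geq 0$ applied to $a=\widetilde{u}$ and $b=\widetilde{v}$, together with the renormalization-type decay of the interaction on $\{|u|>h\}\cup\{|v|>h\}$, forces $\mathcal{J}_{h,k}\geq 0$ after the limit $h\to\infty$. Dividing by $k$ and sending $k\to 0$ then yields $\|(u-v)(T)\|_{L^1(\Omega)}=0$, hence $u\equiv v$ a.e.

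The main obstacle is controlling $\mathcal{J}_{h,k}$: one must split it into a ``good'' part (where both $|u|,|v|\leq h$, which delivers the monotonicity sign) and a ``tail'' part (where either $|u|$ or $|v|$ exceeds $h$), and show the tail vanishes as $h\to\infty$. In the variable-order fractional setting, this tail involves cross-interactions over $D_\Omega$ rather than local gradients, so the required decay is the nonlocal analogue of the usual energy-tail condition and must be deduced from the membership $u,v\in\mathcal{T}_0^{s(\cdot,\cdot),p}(\Omega_T)$ together with the $D_h$-type vanishing condition inherited in the limit. Orchestrating the three limits $\mu\to\infty$, $h\to\infty$ and $k\to 0$ while simultaneously exploiting both the symmetry of the kernel and the monotonicity of $s\mapsto|s|^{p-2}s$ is the technical heart of the argument.
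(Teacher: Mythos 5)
Your existence sketch is broadly consistent with what the paper intends (the paper actually omits the existence proof, deferring to references and to the equivalence with the renormalized solution already constructed in Theorem \ref{result1}); the one caveat is that passing to the limit in the nonlocal term is not just a matter of weak convergence plus lower semicontinuity of a convex functional, because the term in Definition \ref{D2} is a pairing of the flux $|\widetilde u_n|^{p-2}\widetilde u_n$ against $(T_k(u_n)-\phi)(x)-(T_k(u_n)-\phi)(y)$; identifying the weak $L^{p'}$ limit of that flux requires the strong convergence of truncations $T_k(u_n)\to T_k(u)$ in $L^p(0,T;X_0^{s(\cdot,\cdot),p}(\Omega))$ established via Lemma \ref{L3}, which your outline does not invoke.

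The uniqueness argument has a genuine gap. You propose to compare two arbitrary entropy solutions directly, inserting $(T_h(v))_\mu$ into the entropy inequality for $u$ and $(T_h(u))_\mu$ into that for $v$, summing, and letting $\mu\to\infty$. The step claiming this produces $\int_\Omega\Theta_k(u-v)(T)\,dx+\mathcal{J}_{h,k}\leq 0$ is exactly the step that cannot be justified: the time contribution of the test function is $\int_0^T\int_\Omega \mu\big(T_h(v)-(T_h(v))_\mu\big)\,T_k\big(u-(T_h(v))_\mu\big)\,dx\,dt$, and the classical Landes sign argument controls such a term only when the function being regularized is a truncation of the solution appearing inside $T_k$ itself; for these cross terms there is no sign, and reconstructing $\partial_t\Theta_k(u-v)$ from the sum requires an integration-by-parts formula that is unavailable because neither $u$ nor $v$ has a distributional time derivative in any usable space. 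This is precisely why the paper does not compare $u$ and $v$ directly: it compares the entropy solution $v$ with the approximating weak solutions $u_n$ of the other solution, which satisfy $(u_n)_t\in L^2(\Omega_T)$, so that $H_h(u_n)$ is an admissible test function in the entropy inequality for $v$ and the parabolic cross term can be cancelled by testing the equation for $u_n$ with $H_h'(u_n)T_k\big(v-H_h(u_n)\big)$; the conclusion then follows by letting $n\to\infty$ and $h\to\infty$ using the strong convergence of truncations and the decay estimate \eqref{Eqn8}. To close your argument you would either have to route the uniqueness through the approximating sequence in this way, or supply a new chain-rule lemma for differences of entropy solutions, which the proposal does not provide.
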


\begin{remark}
Renormalized solutions in Theorem \ref{result1} is equivalent to entropy solutions in Theorem \ref{result2}.
\end{remark}

\begin{theorem}\label{result3}
Assume $u_0\leq v_0$ and $f \leq g$, let $u_0,v_0\in L^1(\Omega), f,g\in L^1(\Omega_T)$. If $u$ is the renormalized (entropy) solution of problem \eqref{Quotient}, where $u_0, f$ 
is replace by $v_0, g$ and $v$ is also the renormalized (entropy) solution of problem \eqref{Quotient}, then $u \leq v$ a.e. in $\Omega_T$. It is also called the Comparison Principle.
\end{theorem}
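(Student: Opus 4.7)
The plan is to reduce the Comparison Principle for renormalized (equivalently, entropy) solutions to the corresponding statement at the level of the approximating weak solutions constructed in Section 3, and then pass to the limit. Let $u_n$ and $v_n$ denote the weak solutions associated to regularized data $(u_{0,n}, f_n)$ and $(v_{0,n}, g_n)$, where truncation and mollification are chosen so that $u_{0,n}\le v_{0,n}$ in $\Omega$ and $f_n\le g_n$ in $\Omega_T$, and both pairs converge in $L^1$ to $(u_0,f)$ and $(v_0,g)$, respectively.

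For the weak-level comparison, I would subtract the two weak formulations, set $w_n = u_n - v_n$, and test with $\phi = T_k(w_n^+)$, which belongs to $L^p(0,T;X_0^{s(\cdot,\cdot),p}(\Omega))$ after a Steklov-type time regularization. Three terms then appear. The time-derivative contribution produces
\begin{equation*}
\int_\Omega \Theta_k(w_n^+)(T)\,dx - \int_\Omega \Theta_k((u_{0,n}-v_{0,n})^+)(0)\,dx,
\end{equation*}
whose second integral vanishes by the ordering of the initial data. The right-hand side becomes $\int_0^T\int_\Omega (f_n - g_n)\,T_k(w_n^+)\,dx\,dt$, which is nonpositive since $f_n \le g_n$ and $T_k(w_n^+)\ge 0$. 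The nonlocal term equals
\begin{equation*}
\frac{1}{2}\int_0^T\iint_{D_\Omega} \bigl(|\widetilde{u_n}|^{p-2}\widetilde{u_n} - |\widetilde{v_n}|^{p-2}\widetilde{v_n}\bigr)\bigl(T_k(w_n^+)(x,t) - T_k(w_n^+)(y,t)\bigr)\,d\sigma\,dt.
\end{equation*}

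This integrand is pointwise nonnegative: the monotonicity $(|a|^{p-2}a-|b|^{p-2}b)(a-b)\ge 0$ and the nondecreasing nature of $r\mapsto T_k(r^+)$ force $|\widetilde{u_n}|^{p-2}\widetilde{u_n}-|\widetilde{v_n}|^{p-2}\widetilde{v_n}$ and $T_k(w_n^+)(x)-T_k(w_n^+)(y)$ to share the sign of $\widetilde{u_n}-\widetilde{v_n} = w_n(x)-w_n(y)$. Combining the three pieces yields $\int_\Omega \Theta_k(w_n^+)(T)\,dx \le 0$ for every $k>0$ and every $T$, whence $w_n^+ \equiv 0$, i.e., $u_n\le v_n$ a.e.\ in $\Omega_T$. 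Passing to the a.e.\ limit along the subsequences furnished by the existence proofs of Theorems \ref{result1} and \ref{result2} delivers $u\le v$ a.e.\ in $\Omega_T$ for both renormalized and entropy solutions.

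The main obstacle is the rigorous justification of the chain rule $\langle (w_n)_t, T_k(w_n^+)\rangle = \frac{d}{dt}\int_\Omega \Theta_k(w_n^+)\,dx$ when $(w_n)_t$ lies only in the dual space $L^{p'}(0,T;X_0^{s(\cdot,\cdot),p}(\Omega)^*)$; this requires a Steklov averaging argument adapted to the nonlocal variable-order framework. A second, more direct route would bypass the weak approximation and instead test the entropy formulation in Definition \ref{D2} with $T_k((u - T_h(v))^+)$, letting $h\to\infty$ and using condition (1) of Definition \ref{D1} to kill the remainder terms supported on $\{|u|\ge h\}\cup\{|v|\ge h\}$; this approach is technically heavier but preferable if the approximating sequence from Section 3 is not manifestly monotone in its data.
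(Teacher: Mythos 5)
Your proposal is correct and takes essentially the same route as the paper: prove comparison for the approximating weak solutions by subtracting the two weak formulations and testing with a nondecreasing function of $u_n-v_n$ that vanishes at the origin (the paper uses $(u_n-v_n)^+\chi_{(0,t)}$ where you use $T_k\big((u_n-v_n)^+\big)$), so that the time term, the monotone nonlocal term, and the ordered data each have the right sign, and then pass to the a.e.\ limit via the convergences furnished by Theorems \ref{result1} and \ref{result2}. The truncation $T_k$ and the remarks on Steklov averaging are inessential variations on the paper's argument.
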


\section{Auxiliary Theorem}

In this section, we introduce some notation and theorem used later that can be proved by following the arguments given in \cite{ref33,ref35}. In fact, fractional $p$-Laplace equations correspond to a certain completely accretive operator in $L^2$ space that can generate a compressed semigroup under specific range conditions and the following theorem can be derived.
\begin{theorem}\label{result4}
If $A$ is a $m$-completely accretive operator in $L^p(\Omega)$ for all $p > 1$, then the abstract Cauchy problem  
\begin{equation}\label{Eqn3}
\begin{cases}
    \displaystyle
    \frac{du}{dt}+Au=f\\
    u(0)=u_0
\end{cases}
\end{equation}
admits a unique mild solution $u$ for any initial datum $u_0 \in \overline{\mathrm{D}(A)}^{L^p(\Omega)}$ and any non-negative $f$. Moreover, if $A$ is the subdifferential of a proper lower semicontinuous and convex function in $L^2(\Omega)$ and $f\in L^2(0, T; L^2(\Omega))$, then the mild solutions of the above problem  \eqref{Eqn3} is also a strong solution. 
\end{theorem}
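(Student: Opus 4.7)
The plan is to deduce the statement from classical nonlinear semigroup machinery, invoking Crandall--Liggett for the mild-solution part and the Brezis theory of subdifferentials on Hilbert spaces for the strong-solution upgrade. First I would recall the definition: $A$ being $m$-completely accretive in $L^p(\Omega)$ means that $A$ is accretive with respect to every Orlicz-type norm induced by Jacobs' class of contraction functions, and that the range condition $R(I+\lambda A)=L^p(\Omega)$ holds for every $\lambda>0$. Complete accretivity yields that the resolvents $J_\lambda=(I+\lambda A)^{-1}$ are not only $L^p$-contractions but also order-preserving and $L^q$-contractions simultaneously for every $q\ge 1$, which is the structural input needed later.

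With $A$ being $m$-accretive, the Crandall--Liggett generation theorem applies on the closed convex set $\overline{D(A)}^{L^p(\Omega)}$: for every $u_0\in\overline{D(A)}^{L^p(\Omega)}$ the exponential formula
\begin{equation*}
    S(t)u_0=\lim_{n\to\infty}\left(I+\tfrac{t}{n}A\right)^{-n}u_0
\end{equation*}
defines a strongly continuous semigroup of contractions, and the standard inhomogeneous extension produces, for every $f\in L^1(0,T;L^p(\Omega))$, a unique mild solution $u\in C([0,T];L^p(\Omega))$ of \eqref{Eqn3} in the integral (DS-limit) sense. Uniqueness is immediate from the $L^p$-contraction of the resolvents, passed to the limit in the $\varepsilon$-discretization scheme. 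The nonnegativity assumption on $f$ is not actually needed for existence/uniqueness of mild solutions; it enters only in the applications, and I would simply record that the order-preserving property of $J_\lambda$ (a consequence of complete accretivity) ensures $u\ge 0$ whenever $u_0\ge 0$ and $f\ge 0$.

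For the second assertion, I would work in the Hilbert space $L^2(\Omega)$. If $A=\partial\Phi$ for a proper, lower semicontinuous, convex functional $\Phi:L^2(\Omega)\to(-\infty,+\infty]$, then $A$ is automatically maximal monotone, hence $m$-accretive in $L^2$. Brezis' regularization theorem for evolution equations governed by subdifferentials then gives, for any $u_0\in\overline{D(\Phi)}$ and any $f\in L^2(0,T;L^2(\Omega))$, a unique strong solution $u\in C([0,T];L^2(\Omega))$ with $u(t)\in D(A)$ for a.e.~$t\in(0,T)$, $\sqrt{t}\,\frac{du}{dt}\in L^2(0,T;L^2(\Omega))$, and $\frac{du}{dt}(t)+Au(t)\ni f(t)$ a.e.; moreover if $u_0\in D(\Phi)$ then $\frac{du}{dt}\in L^2(0,T;L^2(\Omega))$. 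The mild solution constructed in the first step coincides with this strong solution by uniqueness of mild solutions, which closes the argument.

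The main obstacle is really a bookkeeping one rather than a conceptual one: one has to verify that the hypotheses of Crandall--Liggett (closedness of $D(A)$-orbits, range condition uniformly in $\lambda$) and of Brezis' theorem (lower semicontinuity and properness of $\Phi$, identification $A=\partial\Phi$) transfer cleanly from the abstract assumption to the setting of \eqref{Quotient}. Once the $m$-complete accretivity and the subdifferential structure are taken as given, the rest is a direct citation of the semigroup theorems from \cite{ref33,ref35}, and no further estimate is required at this level of generality.
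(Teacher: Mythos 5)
Your proposal is correct and follows essentially the same route as the paper, which states this theorem without proof and simply refers to the classical nonlinear semigroup literature (Crandall--Liggett generation for mild solutions of $m$-completely accretive operators, and Brezis' subdifferential theory in $L^2(\Omega)$ for the upgrade to strong solutions). Your additional remark that the nonnegativity of $f$ is not needed for existence and uniqueness of mild solutions, but only for order-preservation of the solution, is accurate and consistent with how the theorem is used later in the paper.
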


For all $p>1$, the approximate problem of \eqref{Quotient} defined as
\begin{equation}\label{Eqn4}
    \left\{\begin{array}{ll}(u_n)_t+(-\Delta)_p^{s(\cdot,\cdot)}u_n=f_n\quad &\text{in}\quad\Omega_T,\\u_n=0\quad&\text{in}\quad\Omega^c\times(0,T),\\u_n(x,0)=u_{0n}(x)\quad&\text{in}\quad\Omega.\end{array}\right.
\end{equation}

\begin{definition}\label{D3} 
Let $\Omega\subset\mathbb{R}^N$ be a bounded domain, $u_0\in L^2(\Omega)$ and $f_n\in L^2(0, T; L^2(\Omega))$, a function $u_n\in  W^{1,1}(0,T; L^2(\Omega))$ is called a weak solution to problem \eqref{Eqn4} for any $\varphi\in L^2(\Omega)\cap X_0^{s(\cdot,\cdot),p}(\Omega)$ and a.e. $t\in(0, T)$, the following identity holds:
\begin{equation*}
    \int_\Omega (u_n)_{t}\varphi dx+\frac{1}{2}\int_{D_\Omega}|\widetilde{u}_n(x,y)|^{p-2}\widetilde{u}_n(x,y)\big(\varphi(x)-\varphi(y)\big) d\sigma
    =\int_{\Omega}f_n\varphi dx.
\end{equation*}
\end{definition}

To study problem \eqref{Eqn4} from the perspective of Nonlinear Semigroup Theory, we define a specific operator $\mathcal{A}_{p}^{s(\cdot, \cdot)}: L^2(\Omega)\to[0,+\infty]$ that is closely associated with our problem, there holds
\begin{equation*}
    \mathcal{A}_{p}^{s(\cdot, \cdot)}\left(u_n\right)=\begin{cases}
    \displaystyle\frac{1}{2 p} \int_{D_\Omega}\left|\widetilde{u}_n(x,y)\right|^{p}d\sigma\quad&\text{if}\;u_n\in L^2(\Omega)\cap X_0^{s(\cdot,\cdot),p}(\Omega)\\
    +\infty\quad&\text{if}\;u_n\in L^2(\Omega)\backslash X_0^{s(\cdot,\cdot),p}(\Omega)
    \end{cases}
\end{equation*}

By Fatou's Lemma, it is easy to see that $\mathcal{A}_{p}^{s(\cdot,\cdot)}$ is weak lower semicontinuous and convex in $L^2(\Omega)$, 
then we have that the subdifferential $\partial\mathcal{A}_{p}^{s(\cdot,\cdot)}$ is a maximal monotone operator in $L^2(\Omega)$. 
Consequently, the following definition is presented regarding $\partial\mathcal{A}_{p}^{s(\cdot,\cdot)}$.

\begin{definition}\label{D4}
The operator $A_{p}^{s(\cdot, \cdot)}$ in $L^2(\Omega)\times L^2(\Omega)$ is denoted by $v_n\in A_p^{s(\cdot,\cdot)}(u_n)$, if $u_n, \varphi\in L^2(\Omega)\cap   X_0^{s(\cdot,\cdot),p}(\Omega)$, $v_n\in L^2(\Omega)$ and
\begin{equation*}
    \int_\Omega v_n\varphi dx +\frac{1}{2}\int_{D_\Omega}|\widetilde{u}_n(x,y)|^{p-2}\widetilde{u}_n(x,y)\big(\varphi(x)-\varphi(y)\big) d\sigma
    =0.
\end{equation*}
\end{definition}

Next, we prove the conditions that the operator $A_p^{s(\cdot,\cdot)}$ satisfied for the application of Nonlinear Semigroup Theory.

\begin{theorem} \label{result5}
The operator $A_{p}^{s(\cdot,\cdot)}$ is $m$-completely accretive in $L^2(\Omega)$ with dense domain and satisfied $A_{p}^{s(\cdot,\cdot)}=\partial\mathcal{A}_{p}^{s(\cdot,\cdot)}$.\\
\end{theorem}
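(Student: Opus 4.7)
The plan breaks into four pieces: (a) the identification $A_p^{s(\cdot,\cdot)} = \partial\mathcal{A}_p^{s(\cdot,\cdot)}$, (b) accretivity and complete accretivity of $A_p^{s(\cdot,\cdot)}$, (c) the range condition $R(I + \lambda A_p^{s(\cdot,\cdot)}) = L^2(\Omega)$ for every $\lambda > 0$, and (d) density of the domain. Pieces (b) and (c) together give $m$-complete accretivity, and combined with (a) they give the full statement: the subdifferential being maximal monotone forces the proved inclusion $A_p^{s(\cdot,\cdot)} \subset \partial\mathcal{A}_p^{s(\cdot,\cdot)}$ to be an equality.

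For the inclusion $A_p^{s(\cdot,\cdot)} \subset \partial\mathcal{A}_p^{s(\cdot,\cdot)}$ I would fix $v \in A_p^{s(\cdot,\cdot)}(u)$ and, for an arbitrary $w \in L^2(\Omega) \cap X_0^{s(\cdot,\cdot),p}(\Omega)$, test the identity of Definition \ref{D4} with $\varphi = w - u$. Applying the elementary pointwise convexity inequality $|a|^{p-2}a(b-a) \leq \tfrac{1}{p}(|b|^p - |a|^p)$ with $a = \widetilde{u}(x,y)$ and $b = \widetilde{w}(x,y)$ and integrating against $d\sigma$ yields
\[
\int_\Omega v(w - u) dx \;\leq\; \mathcal{A}_p^{s(\cdot,\cdot)}(w) - \mathcal{A}_p^{s(\cdot,\cdot)}(u),
\]
which is the subdifferential inequality. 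Since $\mathcal{A}_p^{s(\cdot,\cdot)}$ is convex, proper, and weakly lower semicontinuous on $L^2(\Omega)$, the graph $\partial\mathcal{A}_p^{s(\cdot,\cdot)}$ is maximal monotone; once I know $A_p^{s(\cdot,\cdot)}$ has full range, the inclusion upgrades to equality.

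For complete accretivity, I take $v_i \in A_p^{s(\cdot,\cdot)}(u_i)$ ($i=1,2$), choose a non-decreasing Lipschitz contraction $q:\mathbb{R}\to\mathbb{R}$ with $q(0)=0$ and compact support, and test the difference of the two identities with $\varphi = q(u_1 - u_2)$. The kernel term becomes
\[
\tfrac{1}{2}\iint_{D_\Omega}\bigl(|\widetilde{u}_1|^{p-2}\widetilde{u}_1 - |\widetilde{u}_2|^{p-2}\widetilde{u}_2\bigr)\bigl[q(u_1-u_2)(x) - q(u_1-u_2)(y)\bigr] d\sigma,
\]
which is pointwise non-negative because $r\mapsto|r|^{p-2}r$ is strictly increasing and $q$ is non-decreasing, so the two bracketed factors share sign on $D_\Omega$. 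Hence $\int_\Omega (v_1-v_2) q(u_1-u_2)\,dx \geq 0$, the defining inequality of complete accretivity. The range condition I would establish by the direct method: for $g \in L^2(\Omega)$ and $\lambda > 0$, the functional
\[
J(u) = \tfrac{1}{2}\int_\Omega (u-g)^2 dx + \lambda \mathcal{A}_p^{s(\cdot,\cdot)}(u)
\]
is coercive on $L^2(\Omega) \cap X_0^{s(\cdot,\cdot),p}(\Omega)$ (via Lemma \ref{L2} for the $X_0$-norm) and weakly lower semicontinuous, so admits a minimizer $u$ whose Euler-Lagrange identity reads $u - g + \lambda A_p^{s(\cdot,\cdot)}(u) \ni 0$. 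Density of the domain follows because $C_0^\infty(\Omega) \subset L^2(\Omega) \cap X_0^{s(\cdot,\cdot),p}(\Omega) \subset D(A_p^{s(\cdot,\cdot)})$ and $C_0^\infty(\Omega)$ is dense in $L^2(\Omega)$.

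The main obstacle I anticipate is the nonlocal Euler-Lagrange computation in the range step: one has to compute the Gâteaux derivative of $\mathcal{A}_p^{s(\cdot,\cdot)}$ at the minimizer in a direction $\varphi \in L^2(\Omega) \cap X_0^{s(\cdot,\cdot),p}(\Omega)$ and identify it with the principal-value integral of Definition \ref{D4}, handling the symmetrization of the kernel and justifying differentiation under the integral sign via a dominated-convergence argument that uses $ps(x,y)<N$. All other steps reduce to standard maximal-monotone-operator calculus once this variational identity is in hand.
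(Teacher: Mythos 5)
Three of your four pieces (complete accretivity via a monotone test function, the range condition by minimizing $J$, and the subdifferential inequality obtained by testing with $w-u$ and the convexity inequality $|a|^{p-2}a(b-a)\leq\tfrac1p(|b|^p-|a|^p)$) coincide step for step with the paper's Steps 1, 2 and 4, so no comment is needed there beyond one small slip: a non-decreasing function with compact support and $q(0)=0$ is identically zero; you mean that $q'$ (not $q$) has compact support, as in the class $P_0$ the paper uses. Also, your anticipated ``main obstacle'' is smaller than you fear: Definition \ref{D4} defines $A_p^{s(\cdot,\cdot)}$ directly through the symmetrized bilinear form, so the Euler--Lagrange step only requires differentiating $t\mapsto\frac{1}{2p}\int_{D_\Omega}|\widetilde{u}+t\widetilde{\varphi}|^p\,d\sigma$ at $t=0$, which is H\"older plus dominated convergence on $d\sigma$; no principal-value analysis or use of $ps(x,y)<N$ is needed.

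The genuine gap is the density of the domain. You assert $C_0^\infty(\Omega)\subset L^2(\Omega)\cap X_0^{s(\cdot,\cdot),p}(\Omega)\subset \mathrm{D}(A_p^{s(\cdot,\cdot)})$, but the second inclusion is unjustified and in general false: membership in $\mathrm{D}(A_p^{s(\cdot,\cdot)})$ requires the existence of $v\in L^2(\Omega)$ representing the nonlocal form, i.e.\ $(-\Delta)_p^{s(\cdot,\cdot)}u\in L^2(\Omega)$, whereas for a generic $u\in X_0^{s(\cdot,\cdot),p}(\Omega)\cap L^2(\Omega)$ the operator only lands in the dual space $X_0^{s(\cdot,\cdot),p}(\Omega)^*$. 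Even the weaker claim $C_0^\infty(\Omega)\subset\mathrm{D}(A_p^{s(\cdot,\cdot)})$ would need a separate proof that $(-\Delta)_p^{s(\cdot,\cdot)}\phi\in L^2(\Omega)$ for smooth $\phi$, which is delicate here: the usual principal-value cancellation of the first-order Taylor term relies on a symmetry of the kernel under $y\mapsto 2x-y$ that the variable order $s(x,y)$ destroys, and the case $1<p<2$ is problematic in any event. The paper avoids all of this (its Step 3) by using the range condition you have already established: for $w\in X_0^{s(\cdot,\cdot),p}(\Omega)\cap L^2(\Omega)$ set $u_\lambda=(I+\lambda A_p^{s(\cdot,\cdot)})^{-1}w\in\mathrm{D}(A_p^{s(\cdot,\cdot)})$, test the defining identity with $\varphi=w-u_\lambda$, and apply the same convexity inequality to get
\begin{equation*}
\|u_\lambda-w\|_{L^2(\Omega)}^2\leq\lambda\bigl(\mathcal{A}_p^{s(\cdot,\cdot)}(w)-\mathcal{A}_p^{s(\cdot,\cdot)}(u_\lambda)\bigr)\leq\lambda\,\mathcal{A}_p^{s(\cdot,\cdot)}(w)\to0,
\end{equation*}
so $X_0^{s(\cdot,\cdot),p}(\Omega)\cap L^2(\Omega)\subset\overline{\mathrm{D}(A_p^{s(\cdot,\cdot)})}^{L^2(\Omega)}$, and density in $L^2(\Omega)$ follows since this set contains $C_0^\infty(\Omega)$. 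You should replace your density argument with this resolvent estimate; everything else in your outline stands.
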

\begin{proof}
We prove by several steps. 

\textbf{Step 1} Firstly, we let $u_{n1},\;u_{n2}\in  X_0^{s(\cdot,\cdot),p}(\Omega),\;v_{n1},\;v_{n2}\in A_p^{s(\cdot,\cdot)}(u_n),\;\gamma\in P_0$, that is, $\gamma\in C^\infty(\mathbb{R}),\;0\leq\gamma\leq1,\;\mathrm{supp}(\gamma')$ is compact, $0\not\in\mathrm{supp}(\gamma),\;\gamma(u_{n1}-u_{n2})=\varphi\in  X_0^{s(\cdot,\cdot),p}(\Omega)\cap L^{\infty}(\Omega)$. Then we can get
\begin{equation*}
\begin{split}
    &\int_{\Omega}\big(v_{n1}(x)-v_{n2}(x)\big)\gamma\big(u_{n1}(x)-u_{n2}(x)\big)dx\\
    &+\frac12\int_{D_\Omega}\Big(|\widetilde{u}_{n1}(x,y)|^{p-2}\widetilde{u}_{n1}(x,y)-|\widetilde{u}_{n2}(x,y)|^{p-2}\widetilde{u}_{n2}(x,y)\Big) \\
    &\cdot\Big[\gamma\big(u_{n1}(x)-u_{n2}(x)\big)-\gamma\big(u_{n1}(y)-u_{n2}(y)\big)\Big]d\sigma \\
    &=0,    
\end{split}
\end{equation*}
thus
\begin{equation*}
\begin{split}
    &\int_{\Omega}\big(v_{n1}(x)-v_{n2}(x)\big)\gamma\big(u_{n1}(x)-u_{n2}(x)\big)dx\\
    &\geq\frac12\int_{D_\Omega}\Big(|\widetilde{u}_{n2}(x,y)|^{p-2}\widetilde{u}_{n2}(x,y)-|\widetilde{u}_{n1}(x,y)|^{p-2}\widetilde{u}_{n1}(x,y)\Big) \\
    &\cdot\Big[\gamma\big(u_{n1}(x)-u_{n2}(x)\big)-\gamma\big(u_{n1}(y)-u_{n2}(y)\big)\Big]d\sigma.
\end{split}
\end{equation*}
The left side of the above formula is $v_n$, the right side $|\widetilde{u}_n(x,y)|^{p-2}\widetilde{u}_n(x,y)$, this process is monotonically increasing, it indicates that $A_p^{s(\cdot,\cdot)}$ is completely accretive. 

\textbf{Step 2} Given function $h(x)$ a.e. in $L^2(\Omega)$,  we consider the variational function
\begin{equation*}
    E(u)=\mathcal{A}_p^{s(\cdot,\cdot)}(u)+\frac{1}{2}\|u\|_{L^2(\Omega)}^2-\int_\Omega hudx,
\end{equation*}
for any $u\in L^2(\Omega)$. 
By Young’s inequality, we know that
\begin{equation*}
    E(u)\geq\frac{1}{2}\|u\|_{L^{2}(\Omega)}^{2}-\int_{\Omega}hudx\geq\frac{1}{4}\|u\|_{L^{2}(\Omega)}^{2}-\|h\|_{L^{2}(\Omega)}^{2}.
\end{equation*}
Thus $E(u)$ has a lower boundary and $\displaystyle\inf\limits_{u\in L^2(\Omega)}E(u)$ is the infimum. According to the definition of the infimum, there exists a minimalist sequence $u_k\in X_0^{s(\cdot,\cdot),p}
(\Omega)\cap L^2(\Omega)$ such that $\displaystyle\lim\limits_{k\to\infty}E(u_k)=\inf\limits_{u\in L^2(\Omega)}E(u)$ with $k\in(0,+\infty)$ i.e. $E(u_k)\leq C$ for a constant $C>0$. Therefore
\begin{equation*}
    \mathcal{A}_{p}^{s(\cdot,\cdot)}(u_{k})+\frac{1}{2}\|u_{k}\|_{L^{2}(\Omega)}^{2}-\int_{\Omega}hu_{k}dx\leq C,\;
\end{equation*}
i.e.
\begin{equation*}
    \mathcal{A}_{p}^{s\:(\cdot,\cdot)}(u_{k})+\frac{1}{4}\|u_k\|_{L^{2}(\Omega)}^{2}\leq C+\|h\|_{L^{2}(\Omega)}^{2}.
\end{equation*}

By Lemma 1 in \cite{ref33}, we derive that $u_k$ is bounded in $X_0^{s(\cdot,\cdot),p}(\Omega)$.
There exists a weak convergence subsequence $\displaystyle\{u_{k_i}\} (i=1,2\ldots)$ in minimization sequence $\{u_k\}$ that $u_{k_i} \rightharpoonup u_n$ weakly in $X_0^{s(\cdot,\cdot),p}(\Omega)$. Since the sequence $\{u_{k_i}\}$ is bounded in $L^2(\Omega)$, we can conclude that $u_n\in L^2(\Omega)$. Furthermore, $E(u)$ is a lower semicontinuous function by Theorem \ref{result4}, then
\begin{equation*}
    \inf\limits_{u\in L^2(\Omega)}E(u)\leq E(u_n)\leq\varliminf_{k\to\infty}E(u_{k_i})\leq\lim\limits_{k\to\infty}E(u_{k_i})=\inf\limits_{u\in L^2(\Omega)}E(u).
\end{equation*}

We conclude that $E(u_n)=\inf\limits_{u\in L^2(\Omega)}E(u)$, $u_n$ is the unique minimizer and satisfies the Euler-Lagrange equation because of the convexity. Then we have
\begin{equation*}
\begin{split}
    \left.\frac{d E(u_{n}+\varepsilon \varphi)}{d\varepsilon}\right|_{\varepsilon=0}
    &=\frac{1}{2}\int_{D_\Omega}|\widetilde{u}_{n}(x,y)|^{p-2}\widetilde{u}_n(x,y)\big(\varphi(x)-\varphi(y)\big)d\sigma\\
    &-\int_{\Omega}\big(h(x)-u_n(x)\big)\varphi(x)dx=0.
\end{split}
\end{equation*}
According to the Definition \ref{D4}, let $h-u_{n}=v_n\in A_p^{s(\cdot,\cdot)}(u_n)$, 
we derive $L^2(\Omega)\subset\mathrm{R}(\mathrm{Id}+A_p^{s(\cdot,\cdot)})$, 
thereby the operator $A_p^{s(\cdot,\cdot)}$ is $m$-completely accretive. 

\textbf{Step 3} Since the operator $A_p^{s(\cdot,\cdot)}$ is $m$-completely accretive in $L^2(\Omega)$, we let $u_{n}\in\mathrm{D}\big(A_{p}^{s(\cdot,\cdot)}\big),\;
n(v'_n-u_n)=v_n\in A_{p}^{s(\cdot,\cdot)}(u_n)$ for any $v'_n\in X_0^{s(\cdot,\cdot),p}(\Omega)\cap L^2(\Omega)$ and $\varphi=v'_n-u_n$, then we can get
\begin{equation*}
\begin{split}
    &\int_{\Omega}n\big({v'_n}(x)-{u_n}(x)\big)^2dx\\
    &+\frac12\int_{D_\Omega}|\widetilde{u}_n(x,y)|^{p-2}\widetilde{u}_n(x,y)\cdot\Big[\big({v'_n}(x)-{v'_n}(y)\big)-\widetilde{u}_n(x,y)\Big]d\sigma\\
    &=0.
\end{split}
\end{equation*}
due to inequality $\displaystyle|m|^{p-2}m(n-m)\leq\frac{1}{p}(|n|^{p}-|m|^{p})$ there holds
\begin{equation*}
\begin{split}
    &\int_{\Omega}\big({v'_n}(x)-{u_n}(x)\big)^2dx\\
    &=\frac{1}{2n}\int_{D_\Omega}|\widetilde{u}_n(x,y)|^{p-2}\widetilde{u}_n(x,y)\big({v'_n}(y)-{v'_n}(x)\big)d\sigma-\frac{1}{2n}\int_{D_\Omega}|\widetilde{u}_n(x,y)|^p d\sigma\\
    &\leq\frac{1}{2n}\int_{D_\Omega}|\widetilde{u}_n(x,y)|^{p-2}\widetilde{u}_n(x,y)\big({v'_n}(y)-{v'_n}(x)\big)d\sigma\\
    &\leq\frac{1}{2np}\int_{D_\Omega}|{v'_n}(y)-{v'_n}(x)+{u_n}(x)-{u_n}(y)|^p d\sigma-\frac{1}{2np}\int_{D_\Omega}|\widetilde{u}_n(x,y)|^p d\sigma\\
    &\leq\frac{1}{2{n^{p+1}}p}\int_{D_\Omega}|{v_n}(y)-{v_n}(x)|^p d\sigma\\
    &=\frac{1}{2{n^{p+1}}p}[v_n]_{W^{s(\cdot,\cdot),p}(\Omega)}^p.    
\end{split}
\end{equation*}

We conclude that $u_n \to v'_n$ in $L^2(\Omega)$, thus
$X_0^{s(\cdot,\cdot),p}(\Omega)\cap L^2(\Omega)\subset\overline{\mathrm{D}(A_p^{s(\cdot,\cdot)})}^{L^2(\Omega)}$. In a word, $\mathrm{D}(A_p^{s(\cdot,\cdot)})$ is dense in $L^2(\Omega)$. 

\textbf{Step 4} Let $u'_{n}\in  X_0^{s(\cdot,\cdot),p}(\Omega)\cap  L^2(\Omega),\;v_n\in A_{p}^{s(\cdot,\cdot)}(u_n)$ and $u'_n-u_n =\varphi\in X_0^{s(\cdot,\cdot),p}(\Omega)\cap L^2(\Omega)$, then we can get
\begin{equation*}
\begin{split}
    &\int_{\Omega}{v_n}\big({u'_n}(x)-{u_n}(x)\big)dx\\
    &+\frac12\int_{D_\Omega}|\widetilde{u}_n(x,y)|^{p-2}\widetilde{u}_n(x,y)\cdot\big({u'_n}(x)-{u_n}(x)-{u'_n}(y)+{u_n}(y)\big)d\sigma\\
    &=0.
\end{split}
\end{equation*}
Similar to above arguments, we have
\begin{equation*}
\begin{split}
    &\int_{\Omega}{v_n}\big({u'_n}(x)-{u_n}(x)\big)dx\\
    &=\frac{1}{2}\int_{D_\Omega}|\widetilde{u}_n(x,y)|^{p-2}\widetilde{u}_n(x,y)\big({u'_n}(y)-{u'_n}(x)-{u_n}(y)+{u_n}(x)\big)d\sigma\\
    &\leq\frac{1}{2}\int_{D_\Omega}|\widetilde{u_n}(x,y)|^{p-2}\widetilde{u_n}(x,y)\Big[\big({u'_n}(y)-{u'_n}(x)\big)-\big({u_n}(y)-{u_n}(x)\big)\Big]d\sigma\\
    &\leq\frac{1}{2p}\int_{D_\Omega}|{u'_n}(y)-{u'_n}(x)|^p d\sigma-\frac{1}{2p}\int_{D_\Omega}|{u_n}(y)-{u_n}(x)|^p d\sigma\\
    &=\mathcal{A}_{p}^{s\:(\cdot,\cdot)}(u'_n)-\mathcal{A}_{p}^{s\:(\cdot,\cdot)}(u_n).
\end{split}
\end{equation*}
Hence $v_n\in\partial\mathcal{A}_{p}^{s(\cdot,\cdot)}(u_n)$, that is to say $A_p^{s(\cdot,\cdot)}\subset\partial\mathcal{A}_{p}^{s(\cdot,\cdot)}$ as the operator $A_p^{s(\cdot,\cdot)}$ is $m$-completely accretive in $L^2(\Omega)$.
\end{proof}

\begin{theorem} \label{result6}
Let $u_0\in L^2(\Omega)$,\; there exists a unique weak solution of the problem \eqref{Eqn4} on $[0, T]$ for any $T> 0$. 
\end{theorem}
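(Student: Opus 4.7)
The plan is to deduce this theorem directly from the abstract framework already assembled: Theorem \ref{result5} showed that $A_p^{s(\cdot,\cdot)}$ is $m$-completely accretive in $L^2(\Omega)$, has dense domain, and coincides with the subdifferential $\partial \mathcal{A}_p^{s(\cdot,\cdot)}$ of a proper, convex, lower semicontinuous functional on $L^2(\Omega)$, while Theorem \ref{result4} guarantees a unique strong solution of the abstract Cauchy problem under precisely these hypotheses. So the first step is to rewrite \eqref{Eqn4} as the abstract evolution equation $(u_n)_t + A_p^{s(\cdot,\cdot)} u_n \ni f_n$ with $u_n(0)=u_{0n}$ in $L^2(\Omega)$, noting that $u_0 \in L^2(\Omega) = \overline{D(A_p^{s(\cdot,\cdot)})}^{L^2(\Omega)}$ by the density statement in Theorem \ref{result5}.

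Next I would invoke Theorem \ref{result4}. Since $A_p^{s(\cdot,\cdot)}$ is $m$-completely accretive, a unique mild solution exists; because $A_p^{s(\cdot,\cdot)} = \partial \mathcal{A}_p^{s(\cdot,\cdot)}$ and $f_n \in L^2(0,T;L^2(\Omega))$, this mild solution upgrades to a strong solution with $u_n \in W^{1,1}(0,T;L^2(\Omega))$ satisfying $(u_n)_t(t) + A_p^{s(\cdot,\cdot)} u_n(t) \ni f_n(t)$ for a.e. $t \in (0,T)$. In particular $u_n(t) \in D(A_p^{s(\cdot,\cdot)}) \subset X_0^{s(\cdot,\cdot),p}(\Omega) \cap L^2(\Omega)$ for a.e. $t$, so $u_n(t)$ is admissible in the bilinear form.

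To obtain the weak formulation of Definition \ref{D3}, I would fix $t$ where the inclusion holds and test against an arbitrary $\varphi \in L^2(\Omega) \cap X_0^{s(\cdot,\cdot),p}(\Omega)$. Rewriting $f_n(t) - (u_n)_t(t) \in A_p^{s(\cdot,\cdot)} u_n(t)$ and applying the characterization in Definition \ref{D4} yields
\begin{equation*}
\int_\Omega \bigl(f_n - (u_n)_t\bigr)\varphi\, dx - \frac{1}{2} \int_{D_\Omega} |\widetilde{u}_n(x,y)|^{p-2} \widetilde{u}_n(x,y)\bigl(\varphi(x)-\varphi(y)\bigr)\, d\sigma = 0,
\end{equation*}
which is exactly the identity in Definition \ref{D3}. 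Uniqueness is then immediate: given two weak solutions $u_n^{(1)}, u_n^{(2)}$ with the same data, subtracting the two identities and testing with $\varphi = u_n^{(1)} - u_n^{(2)}$, the bilinear term is nonnegative by the monotonicity of the map $r \mapsto |r|^{p-2}r$ (exactly the computation used in Step 1 of Theorem \ref{result5}, applied with $\gamma = \mathrm{id}$), so $\frac{1}{2}\frac{d}{dt}\|u_n^{(1)} - u_n^{(2)}\|_{L^2}^2 \le 0$ and the zero initial datum forces equality.

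The only point requiring genuine care — the main obstacle in this otherwise bookkeeping proof — is justifying that the abstract strong solution really satisfies the pointwise-in-$t$ variational identity of Definition \ref{D3} rather than merely an integrated-in-time version. The $W^{1,1}(0,T;L^2(\Omega))$ regularity of $u_n$ provided by Theorem \ref{result4} guarantees that $(u_n)_t(t) \in L^2(\Omega)$ for a.e. $t$, and the subdifferential identity $v_n(t) = f_n(t) - (u_n)_t(t) \in A_p^{s(\cdot,\cdot)}(u_n(t))$ then holds for a.e. $t$; passing this through Definition \ref{D4} gives the desired pointwise-in-$t$ weak formulation. With this identification, all the remaining manipulations are routine.
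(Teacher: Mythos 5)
Your proposal is correct and follows essentially the same route as the paper: both deduce the result by combining Theorem \ref{result4} with the $m$-complete accretivity and subdifferential identification established in Theorem \ref{result5}, then identifying the abstract strong solution with the weak solution of Definition \ref{D3}. Your version merely fills in the details the paper leaves implicit (the passage through Definition \ref{D4} and the direct monotonicity argument for uniqueness), which is a useful elaboration but not a different proof.
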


\begin{proof}
Combining Theorem \ref{result4} and Theorem \ref{result5}, there exists a unique strong solution for the abstract Cauchy problem
\begin{equation*}
\begin{cases}
    \displaystyle
    \frac{d{u_n}}{dt}+A_p^{s(\cdot,\cdot)}{u_n}=f_n,\\
    u(0)=u_0.
\end{cases}
\end{equation*}
The notion of a weak solution to the problem \eqref{Eqn4} aligns with the concept of a strong solution for above equation, thereby confirming both the existence and uniqueness of the weak solution.
\end{proof}

Next, we will prove an important conclusion by Lemma 3.6 \cite{ref6} that will be utilized in the subsequent discussion.

\begin{lemma} \label{L3}
Let $u_n\in X_0^{s(\cdot,\cdot),p}(\Omega)$ be an increasing sequence such that $u_n\geq0$, assume $T_k(u_n)\in X_0^{s(\cdot,\cdot),p}(\Omega)$ is bounded for all $k > 0$ and $T_k(u_n)$ 
 weakly converges to $T_k(u)$. Then there exists a measurable function $u$ such that $u_n\to u$ and $ T_k(u_n)\to T_k(u)$ strongly in $X_0^{s(\cdot,\cdot),p}(\Omega)$.\\
\end{lemma}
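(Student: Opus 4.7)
The overall strategy is to construct $u$ as the pointwise monotone limit of $u_n$, upgrade the given weak convergence of $T_k(u_n)$ to strong convergence in $L^p(\Omega)$ via dominated convergence, and then promote this to strong convergence in $X_0^{s(\cdot,\cdot),p}(\Omega)$ using the uniform convexity of the space together with a Brezis--Lieb/Vitali equi-integrability analysis on $L^p(D_\Omega, d\sigma)$. This parallels the overall scheme of Lemma~3.6 in \cite{ref6}, adapted to the variable order $s(\cdot,\cdot)$.

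Since $\{u_n\}$ is a nonnegative increasing sequence, the pointwise limit $u(x) := \sup_n u_n(x)$ is a well-defined measurable $[0,+\infty]$-valued function. To establish $u < +\infty$ almost everywhere, Lemma~\ref{L2} provides a uniform $L^p(\Omega)$-bound on $T_k(u_n)$ in terms of its $X_0^{s(\cdot,\cdot),p}(\Omega)$-norm, and Chebyshev's inequality applied to $\{u_n \geq k\}$, on which $T_k(u_n) = k$, combined with the monotonicity of these superlevel sets in $n$, controls $|\{u \geq k\}|$. Continuity of $T_k$ then yields $T_k(u_n) \to T_k(u)$ pointwise almost everywhere, and the uniform bound $|T_k(u_n)| \leq k$ on the bounded domain $\Omega$ promotes this to strong $L^p(\Omega)$-convergence by dominated convergence.

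For strong convergence in $X_0^{s(\cdot,\cdot),p}(\Omega)$: since this space is uniformly convex and $T_k(u_n) \rightharpoonup T_k(u)$ weakly by hypothesis, the Kadec--Klee property reduces the task to norm convergence $\|T_k(u_n)\|_{X_0^{s(\cdot,\cdot),p}(\Omega)} \to \|T_k(u)\|_{X_0^{s(\cdot,\cdot),p}(\Omega)}$, and weak lower semi-continuity supplies the $\liminf$-inequality. The matching $\limsup$-inequality is where the main difficulty lies, because the monotonicity of $u_n$ does not pass to the differences $|u_n(x)-u_n(y)|^p$ appearing in the Gagliardo integrand. My plan is to view $A_n(x,y) := T_k(u_n(x)) - T_k(u_n(y))$ as an $L^p(D_\Omega, d\sigma)$-bounded sequence converging pointwise to $A(x,y) := T_k(u(x)) - T_k(u(y))$, split $D_\Omega$ according to whether $\max\{u(x), u(y)\} \leq k$ so that the pointwise monotonicity of $u_n$ directly controls the integrand on the first piece while $T_k(u_n) \uparrow k$ reduces the complement to the transition strip $\{u(x) \leq k < u(y)\}$, and on that strip combine the Lipschitz bound $|T_k(a) - T_k(b)| \leq |a-b|$ with the uniform $X_0^{s(\cdot,\cdot),p}(\Omega)$-bound on $\{T_k(u_n)\}$ to run a Vitali equi-integrability argument. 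Closing this $\limsup$ estimate is the principal obstacle; once done, the Kadec--Klee reduction delivers the desired strong convergence.
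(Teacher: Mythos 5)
There is a genuine gap, and you have in fact named it yourself: the matching $\limsup$ inequality
\[
\limsup_{n\to\infty}\int_{D_\Omega}\big|T_k(u_n)(x)-T_k(u_n)(y)\big|^p\,d\sigma\;\leq\;\int_{D_\Omega}\big|T_k(u)(x)-T_k(u)(y)\big|^p\,d\sigma
\]
is the entire content of the lemma, and your proposal leaves it open. It cannot be closed by the route you sketch. Monotonicity of $u_n$ gives no monotone or dominated control of the differences: even on the ``easy'' set $\{u(x)\le k,\ u(y)\le k\}$, where $T_k(u_n)=u_n$, the quantities $|u_n(x)-u_n(y)|^p$ need not be dominated by $|u(x)-u(y)|^p$, and a uniform $X_0^{s(\cdot,\cdot),p}(\Omega)$-bound on $T_k(u_n)$ gives boundedness of $|A_n|^p$ in $L^1(D_\Omega,d\sigma)$ but not equi-integrability, so Vitali does not apply. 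Indeed, from the hypotheses as literally stated (nonnegative, increasing, truncations bounded and weakly convergent) the conclusion is false: take $u_n=(1-2^{-n})u+2^{-n}g_n$ with $0\le g_n\le u/2$ oscillating at frequency chosen so that $[g_n]_{W^{s,p}}\sim 2^{n}$; then $u_n$ is increasing, bounded in the space, converges weakly to $u$, but $[u_n-u]_{W^{s,p}}\not\to 0$. The lemma is only true because the $u_n$ are solutions of the approximate problems, and the paper's proof uses this essentially: it tests the equation for $u_n$ with $\varphi=T_k(u_n)-T_k(u)$, applies Young's inequality to the cross term, and shows that the remainder
\[
\big(U(x,y)-T(x,y)\big)\Big[\big(T_k(u_n)(x)-T_k(u)(x)\big)-\big(T_k(u_n)(y)-T_k(u)(y)\big)\Big]\;\geq\;0
\]
by a four-region decomposition of $D_\Omega$ according to $u_n(x)\lessgtr k$, $u_n(y)\lessgtr k$, exploiting $u\ge u_n$ pointwise. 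The right-hand side $\int_\Omega f_n\big(T_k(u_n)-T_k(u)\big)\,dx\to 0$ then yields the $\limsup$ bound. So the missing idea is precisely the use of the PDE as an energy identity; your functional-analytic reduction cannot substitute for it.

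A secondary remark: once norm convergence is in hand, your Kadec--Klee (Radon--Riesz) step is a legitimate alternative to the paper's Brezis--Lieb/Fatou computation with the elementary inequality $|a-b|^p\le 2^p(|a|^p+|b|^p)$; the two are interchangeable here since $X_0^{s(\cdot,\cdot),p}(\Omega)$ is uniformly convex. Likewise your construction of $u$ as the monotone pointwise limit, the Chebyshev argument for $u<+\infty$ a.e., and the $L^p(\Omega)$ convergence of $T_k(u_n)$ by dominated convergence are all fine and are essentially taken for granted in the paper. But none of that touches the crux.
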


\begin{proof}
Let the sequence $u_n$ be a weak solution, taking $\varphi=T_k(u_n)-T_k(u)$ as a test function, thus
\begin{equation*}
\begin{split}
    &\int_\Omega(u_n)_t\varphi dx+\frac{1}{2}\int_{D_\Omega}|\widetilde{u}_n(x,y)|^{p-2}\widetilde{u}_n(x,y)\\
    &\cdot\Big[T_k(u_n)(x)-T_k(u)(x)-T_k(u_n)(y)+T_k(u)(y)\Big] d\sigma\\
    &=\int_{\Omega}f\big(T_k(u_n)-T_k(u)\big) dx
\end{split}
\end{equation*}
Donate
\begin{align*}
    U(x,y)&=|\widetilde{u}_n(x,y)|^{p-2}\widetilde{u}_n(x,y), \\
    T(x,y)&=\big|T_k(u_n)(x)-T_k(u_n)(y)\big|^{p-2}\cdot\Big[T_k(u_n)(x)-T_k(u_n)(y)\Big]. 
\end{align*}
The above equation can be rewritten as:
    \begin{equation*}
         A\leq B+\int_{\Omega}f_n\big(T_k(u_n)-T_k(u)\big) dx,
    \end{equation*}
where
\begin{equation*}
\begin{split}
    A&=\int_{D_\Omega}|\widetilde{u}_n(x,y)|^{p-2}\widetilde{u}_n(x,y)\cdot\Big[T_{k}(u_{n})(x)-T_{k}(u_{n})(y)\Big] d\sigma\\
    &=\int_{D_\Omega}\big|T_{k}(u_{n})(x)-T_{k}(u_{n})(y)\big|^p d\sigma\\
    &+\int_{D_\Omega}\big(U(x,y)-T(x,y)\big)\Big[T_{k}(u_{n})(x)-T_{k}(u_{n})(y)\Big] d\sigma.
\end{split}
\end{equation*}
Owing to Young's inequality
    \begin{equation*}
        T(x,y)\Big[T_{k}(u)(x)-T_{k}(u)(y)\Big]\leq\frac{1}{m}\big(T(x,y)\big)^m+\frac{1}{n}\big[T_{k}(u)(x)-T_{k}(u)(y)\big]^n
    \end{equation*}
where $\displaystyle\frac{1}{m}+\frac{1}{n}=1,\;m=\frac{p-1}{p},\;n=\frac{1}{p}$, applied in the following equation
\begin{equation*}
\begin{split}
    B&=\int_{D_\Omega}|\widetilde{u}_n(x,y)|^{p-2}\widetilde{u}_n(x,y)\cdot\Big[T_{k}(u)(x)-T_{k}(u)(y)\Big] d\sigma\\
    &=\int_{D_\Omega}T(x,y)\Big[T_{k}(u)(x)-T_{k}(u)(y)\Big] d\sigma\\
    &+\int_{D_\Omega}\big(U(x,y)-T(x,y)\big)\Big[T_{k}(u)(x)-T_{k}(u)(y)\Big] d\sigma\\
    &\leq\int_{D_\Omega}\frac{p-1}{p}\big|T_{k}(u_n)(x)-T_{k}(u_n)(y)\big|^p d\sigma\\
    &+\int_{D_\Omega}\frac{1}{p}\big|T_{k}(u)(x)-T_{k}(u)(y)\big|^p d\sigma\\
    &+\int_{D_\Omega}\big(U(x,y)-T(x,y)\big)\Big[T_k(u)(x)-T_k(u)(y)\Big] d\sigma.
\end{split}
\end{equation*}
Thus
\begin{equation*}
\begin{split}
    &\int_{D_\Omega}\frac{1}{p}\big|T_{k}(u_{n})(x)-T_{k}(u_{n})(y)\big|^p d\sigma\\
    &+\int_{D_\Omega}\big(U(x,y)-T(x,y)\big)\cdot\Big[T_{k}(u_{n})(x)-T_{k}(u)(x)-T_{k}(u_{n})(y)+T_{k}(u)(y)\Big]d\sigma\\
    &\leq\int_{D_\Omega}\frac{1}{p}\big|T_{k}(u)(x)-T_{k}(u)(y)\big|^p d\sigma
    +\int_{\Omega}f_{n}\big(T_{k}(u_{n})-T_{k}(u)\big) dx.
\end{split}
\end{equation*}

We assume $S(x,y)$ a.e. in ${D_\Omega}$ defined as
    \begin{equation*}
        S(x,y)=\big(U(x,y)-T(x,y)\big)\Big[\big(T_{k}(u_{n})(x)-T_{k}(u)(x)\big)-\big(T_{k}(u_{n})(y)-T_{k}(u)(y)\big)\Big], 
    \end{equation*}
In the second term of the left side, for $u(x)\geq u_{n}(x),\;u(y)\geq u_{n}(y)$, we divide ${D_\Omega}$ into the following four parts: ${D_\Omega}= Q_1\cup Q_2\cup Q_3\cup Q_4$ 
\begin{align*}
    Q_1&=\{(x,y)\in {D_\Omega}:u_n(x)\leq k,\;u_n(y)\leq k\},\\
    Q_2&=\{(x,y)\in {D_\Omega}:u_n(x)\geq k,\;u_n(y)\geq k\},\\
    Q_3&=\{(x,y)\in {D_\Omega}:u_n(x)\leq k,\;u_n(y)\geq k\},\\
    Q_4&=\{(x,y)\in {D_\Omega}:u_n(x)\geq k,\; u_n(y)\leq k\}.
\end{align*}
(1) If $(x,y)\in Q_{1}$, we have $U(x,y)-T(x,y)=0$, then $S(x,y)=0$.\\
(2) If $(x,y)\in Q_{2}$, the same goes for $S(x,y)=0$.\\
(3) If $(x,y)\in Q_{3}$, we have $u(x)\geq u_{n}(x),\;u(y)\geq u_{n}(y)\geq k$, then
\begin{align*}
    \big[T_{k}(u_{n})(x)-T_{k}(u)(x)\big]&-\big[T_{k}(u_{n})(y)-T_{k}(u)(y)\big]=T_{k}(u_{n})(x)-T_{k}(u)(x)\leq0, \\
    U(x,y)-T(x,y)
    &=\big(u_n(x)-u_n(y)\big)^{p-1}-\big[T_{k}(u_{n})(x)-T_{k}(u_n)(y)\big]^{p-1}\\
    &=\big(u_n(x)-u_n(y)\big)^{p-1}-\big(u_n(x)-k\big)^{p-1}\leq0,
\end{align*}
then $S(x,y)\geq0$.\\
(4) If $(x,y)\in Q_{4}$ we can prove that $S(x,y)\geq0$ in the same way.\\
Thus $S(x,y)\geq0$ a.e.in ${D_\Omega}$, i.e.
    \begin{equation*}
        \int_{D_\Omega}\big(U(x,y)-T(x,y)\big)\Big[T_k(u_n)(x)-T_k(u)(x)-T_k(u_n)(y)+T_k(u)(y)\Big]\geq0.
    \end{equation*}
Since $T_k(u_n)\to T_k(u)$ in $L^p(\Omega)$ as $n\to\infty$, we conclude that
    \begin{equation*}
        \int_\Omega f_n\big(T_k(u_n)-T_k(u)\big) dx\to0.
    \end{equation*}
Therefore
    \begin{equation*}
        \limsup_{n\to\infty}\int_{D_\Omega}\big|T_{k}(u_{n})(x)-T_{k}(u_{n})(y)\big|^{p} d\sigma
        \leq\int_{D_\Omega}\big|T_{k}(u)(x)-T_{k}(u)(y)\big|^{p} d\sigma.
    \end{equation*}
Through the Fatou's Lemma, it can be concluded that
\begin{equation*}
\begin{split}
    \frac{1}{p}\big|T_{k}(u_n)(x)-T_{k}(u_n)(y)\big|^p d\sigma
    &=\int_{D_\Omega}\liminf_{n\to\infty}\frac{1}{p}\big|T_k(u_n)(x)-T_k(u_n)(y)\big|^p d\sigma\\
    &\leq\liminf_{n\to\infty} \int_{D_\Omega}\frac{1}{p}\big|T_k(u_n)(x)-T_k(u_n)(y)\big|^p d\sigma\\    &\leq\limsup_{n\to\infty}\int_{D_\Omega}\frac{1}{p}\big|T_k(u_n)(x)-T_k(u_n)(y)\big|^p d\sigma\\
    &\leq\int_{D_\Omega}\frac{1}{p}\big|T_k(u)(x)-T_k(u)(y)\big|^p d\sigma,
\end{split}
\end{equation*}
hence
    \begin{equation*}
        \lim_{n\to\infty}\int_{D_\Omega}\frac{1}{p}\big|T_{k}(u_n)(x)-T_{k}(u_n)(y)\big|^p d\sigma
        =\int_{D_\Omega}\frac{1}{p}\big|T_k(u)(x)-T_k(u)(y)\big|^p d\sigma.
    \end{equation*}
Then by Fatou's lemma and
\begin{equation*}
\begin{split}
    &\displaystyle\big|\big(T_k(u_n)(x)-T_k(u_n)(y)\big)-\big(T_k(u)(x)-T_k(u)(y)\big)\big|^p\\
    &\leq2^p\Big[\big|T_k(u_n)(x)-T_k(u_n)(y)\big|^p+\big|T_k(u)(x)-T_k(u)(y)\big|^p\Big],
\end{split}
\end{equation*}
we have
\begin{equation*}
\begin{split}
    &\int_{D_\Omega}\frac{2^{p+1}}{p}\big|T_{k}(u)(x)-T_{k}(u)(y)\big|^p d\sigma\\
    &=\int_{D_\Omega}\frac{1}{p}\liminf_{n\to\infty}\Big(2^p\big|T_{k}(u_{n})(x)-T_{k}(u_{n})(y)\big|^p + 2^p\big|T_k(u)(x)-T_k(u)(y)\big|^p \\
    &-\big|\big(T_k(u_n)(x)-T_k(u_n)(y)\big)-\big(T_k(u)(x)-T_k(u)(y)\big)\big|^p\Big) d\sigma\\        &\leq\liminf_{n\to\infty}\int_{D_\Omega}\frac{1}{p}\Big(2^p\big|T_{k}(u_{n})(x)-T_{k}(u_{n})(y)\big|^p + 2^p\big|T_k(u)(x)-T_k(u)(y)\big|^p\\
    &-\big|\big(T_k(u_n)(x)-T_k(u_n)(y)\big)-\big(T_k(u)(x)-T_k(u)(y)\big)\big|^p\Big) d\sigma\\
    &=\int_{D_\Omega}\frac{2^{p+1}}{p}\big|T_{k}(u)(x)-T_{k}(u)(y)\big|^p d\sigma\\
    &-\limsup_{n\to\infty}\int_{D_\Omega}\frac{1}{p}\big|\big(T_k(u_n)(x)-T_k(u_n)(y)\big)-\big(T_k(u)(x)-T_k(u)(y)\big)\big|^p d\sigma,
\end{split}
\end{equation*}
Consequently
    \begin{equation*}
        \int_{D_\Omega}\frac{1}{p}\big|\big(T_k(u_n)(x)-T_k(u_n)(y)\big)-\big(T_k(u)(x)-T_k(u)(y)\big)\big|^p d\sigma\to0.
    \end{equation*}
We conclude
    \begin{equation*}
        T_k(u_n)\to T_k(u)\quad\text{strongly in}\quad X_0^{s(\cdot,\cdot),p}(\Omega).
    \end{equation*}
\end{proof}

\section{Proof of main results}

In this section, we will prove the main conclusions of this paper.\\
\begin{proof}[Proof of Theorem \ref{result1}]
(1) Existence of renormalized solutions.

In the approximate problems \eqref{Eqn4}, define $f_n\in L^\infty(\Omega_T),\;f\in L^1(\Omega_T),\;f_n=T_n(f)\geq0$, $f_n\to f$ in $L^1(\Omega_T)$, and $u_{0n}\in L^\infty(\Omega),\;u_0\in L^1(\Omega),\;u_{0n}=T_n(u_0)\geq0,\;u_{0n}\to u_0$ in $L^1(\Omega)$ i.e.
    \begin{equation}\label{Eqn5}
        \|f_n\|_{L^1(\Omega_T)}\leq\|f\|_{L^1(\Omega_T)},\quad
        \|u_{0n}\|_{L^1(\Omega)}\leq\|u_0\|_{L^1(\Omega)}.
    \end{equation}
By Theorem \ref{result6} and Comparison Principle, we can find a unique nonnegative weak solution $u_n\in L^p(0, T; X_0^{s(\cdot,\cdot),p}(\Omega))$ for the problem \eqref{Eqn4}. 

\textbf{Step 1} Let $u_m$ and $u_n$ be two weak solutions, choosing $T_1(u_n-u_m)\chi_{(0,t)}=\varphi\in L^p(0,T; X_0^{s(\cdot,\cdot),p}(\Omega))\cap L^\infty(\Omega_T)$ with $t\leq T$. The subsequent formula can be derived by subtracting
    \begin{equation*}
    \begin{split}
        &\int_{0}^{t}\int_{\Omega}(u_n-u_m)_{t}\cdot T_1(u_n-u_m)dxd\tau\\
        &+\frac{1}{2}\int_{0}^{t}\int_{D_\Omega}
        \Big(|\widetilde{u}_n(x,y,\tau)|^{p-2}
        \widetilde{u}_n(x,y,\tau)
        -|\widetilde{u}_m(x,y,\tau)|^{p-2}
        \widetilde{u}_m(x,y,\tau)\Big)\\
        &\cdot\Big[T_1(u_n-u_m)(x,\tau)-T_1(u_n-u_m)(y,\tau)\Big]
        d\sigma d\tau\\
        &=\int_0^t\int_{\Omega}(f_n-f_m)T_1(u_n-u_m)\chi_{(0,t)}dx d\tau.
    \end{split}
    \end{equation*}
According to the Mean Value Theorem and given the condition that $T'_1 \geq 0$
    \begin{equation*}
        T_1(u_n-u_m)(x,\tau)-T_1(u_n-u_m)(y,\tau)=T'_1(\xi_{nm})\big(\widetilde{u}_n(x,y,\tau)-\widetilde{u}_m(x,y,\tau)\big).
    \end{equation*}
Thus the second term in the above formula is non-negative and there exists $T_1(r)\leq1$ for any $r$, therefore
    \begin{equation*}
    \begin{split}
        \int_0^t\int_\Omega(u_n-u_m)_t\cdot T_1(u_n-u_m)d\tau
        &\leq\int_0^t\int_\Omega(f_n-f_m)T_1(u_n-u_m)\chi_{(0,t)}dxd\tau\\
        &\leq\int_0^t\int_\Omega(f_n-f_m)dxd
        \tau,
    \end{split}
    \end{equation*}
Then, it can be concluded that
    \begin{equation}\label{Eqn6}
    \begin{split}
        \int_\Omega\Theta_1(u_n-u_m)(t) dx
        &\leq\int_\Omega\Theta_1(u_{0n}-u_{0m}) dx+\int_0^t\int_\Omega(f_n-f_m)dxd\tau\\
        &\leq\|u_{0n}-u_{0m}\|_{L^1(\Omega)}+\|f_n-f_m\|_{L^1(\Omega_T)}.
    \end{split}
    \end{equation}
Let $\|u_{0n}-u_{0m}\|_{L^1(\Omega)}+\|f_n-f_m\|_{L^1(\Omega_T)}=\rho$ converge to 0  for $n,m\to+\infty$ caused by \eqref{Eqn5}. Owing to
    \begin{equation*}
    \begin{split}
        \int_\Omega\Theta_1(u_n-u_m)(t) dx
        &=\int_{\{|u_n-u_m|\leq1\}}\frac{|u_n-u_m|^2(t)}2 dx\\
        &+\int_{\{|u_n-u_m|>1\}}|u_n-u_m|(t) dx-\frac{1}{2},
    \end{split}
    \end{equation*}
thus
\begin{align*}
        \int_{\{|u_n-u_m|\leq1\}}|u_n-u_m|(t) dx&\leq\big(2\rho\cdot\mathrm{mean}(\Omega)\big)^{\frac{1}{2}}\\
        \int_{\{|u_n-u_m|>1\}}|u_n-u_m|(t)dx&\leq\rho,
\end{align*}   
we conclude
    \begin{equation*}
        \int_\Omega|u_n-u_m|(t)dx\leq\big(2\rho\cdot\mathrm{mean}(\Omega)\big)^{\frac{1}{2}}+\rho
    \end{equation*} 
converge to 0. Therefore $\{u_{n}\}$ is a Cauchy sequence in $C([0,T];L^1(\Omega))$ and $u_n\to u$ in $C([0,T];L^1(\Omega))$, then we find an a.e. subsequence $u_{n}\to u$ in $\Omega_T$. 

\textbf{Step 2} Recalling the following elementary algebraic inequality
    \begin{equation*}
        |T_k(n)-T_k(m)|^p\leq|n-m|^{p-2}(n-m)\big(T_k(n)-T_k(m)\big)
    \end{equation*}
for $n,m\in\mathbb{R},\;p>1$, combining \eqref{Eqn5}\eqref{Eqn6} and the definition of $T_k,\;\Theta_k$ we get
    \begin{equation}\label{Eqn7}
    \begin{split}
        &\frac{1}{2}\int_{0}^{T}\int_{D_\Omega}\big|T_{k}(u_{n})(x,t)-T_{k}(u_{n})(y,t)\big|^{p} d\sigma dt\\
        &\leq\frac12\int_0^T\int_{D_\Omega}|\widetilde{u}_n(x,y,t)|^{p-2}\widetilde{u}_n(x,y,t)\cdot\big(T_k(u_n)(x,t)-T_k(u_n)(y,t)\big) d\sigma dt\\
        &\leq k\big(\|f_n\|_{L^1(\Omega_T)}+\|u_{0n}\|_{L^1(\Omega)}\big)\\
        &\leq k\big(\|f\|_{L^1(\Omega_T)}+\|u_0\|_{L^1(\Omega)}\big).
    \end{split}
    \end{equation}
Then, we deduce that
    \begin{equation*}
        T_k(u_n)\rightharpoonup T_k(u)\quad\text{weakly in}\quad L^p(0,T;X_0^{s(\cdot,\cdot),p}(\Omega))
    \end{equation*}
    
To handle the time derivative of truncations, we will employ the regularization method \cite{ref31}. For $\epsilon > 0$, we define $\big(T_k(u)\big)_\epsilon\in L^p(0, T;X_0^{s(\cdot,\cdot),p}(\Omega))\cap L^\infty(\Omega_T)$ the time regularization of the function $T_k(u)$ given by
    \begin{equation*}       \Big|\big(T_k(u)\big)_\epsilon(x,t)\Big|:=\Big|\epsilon\int_{-\infty}^t e^{\epsilon(v-t)}T_k\big(u(x,v)\big) dv\Big|\leq k(1-e^{-\epsilon t})<k,
    \end{equation*}
a.e. in $\Omega_T$, and $T_k(u)=0$ for $v < 0$. Observe that, 
    \begin{equation*}
        \frac{\partial(T_k(u))_\epsilon}{\partial t}=\epsilon\Big(T_k(u)-\big(T_k(u)\big)_\epsilon\Big)
    \end{equation*}
is differentiable for a.e. $t\in(0,T)$, thus the sequence $\big(T_k(u)\big)_\epsilon$ as an approximation for function $T_k(u)$. Let us take a sequence $\phi_{\iota}\in C_0^\infty(\Omega)$ satisfied $\lim\limits_{{\iota}\to\infty}\left\|\phi_{\iota}-u_{0}\right\|_{L^{1}(\Omega)}=0$. To handle non-zero initial conditions, define $\varrho_{\epsilon,\iota}$ \cite{ref3} as a smooth approximation of $T_k(u)$ with $\varrho_{\epsilon,\iota}(u)=\big(T_k(u)\big)_\epsilon+e^{-\epsilon t}T_k(\phi_{\iota})$, the function has the following properties:
    \begin{equation*}
    \left\{\begin{array}{l}\big(\varrho_{\epsilon, \iota}(u)\big)_{t} = \epsilon\big(T_{k}(u)-\varrho_{\epsilon, \iota}(u)\big), \\\varrho_{\epsilon, \iota}(u)(0) = T_{k}\left(\phi_{\iota}\right), \\\left|\varrho_{\epsilon, \iota}(u)\right| \leq k, \\\varrho_{\epsilon, \iota}(u) \rightarrow T_{k}(u)\quad\text{strongly in}\quad L^{p}(0, T ; X_{0}^{s(\cdot,\cdot), p}(\Omega)),\;\text{as}\;\epsilon\to+\infty .\end{array}\right.
    \end{equation*}
We choose $\varphi_n=T_{2k}\big(u_n-T_h(u_n)+T_k(u_n)-\varrho_{\epsilon,\iota}(u)\big)$ as a test function where $h>k>0$, 
    \begin{equation*}
    \begin{split}
        &\int_{0}^{T}\int_{\Omega}(u_n)_t \varphi_n dx dt+\int_{0}^{T}\int_{D_\Omega}|\widetilde{u}_n(x,y,t)|^{p-2}\widetilde{u}_n(x,y,t)\cdot\big(\varphi_n(x,t)-\varphi_n(y,t)\big) d\sigma dt\\
        &=\int_{0}^{T}\int_{\Omega} f_n \varphi_n dx dt.
    \end{split}
    \end{equation*}
Denote $\displaystyle\lim\limits_{h\to+\infty}\lim\limits_{\iota\to+\infty}\lim\limits_{\epsilon\to+\infty}\lim\limits_{n\to+\infty}\varphi(n,\epsilon,\iota,h)=0$. Following the arguments in Theorem 1.1 \cite{ref106} we conclude that
    \begin{equation*}
    \begin{split}
        &\int_{0}^{T}\int_{D_\Omega}|\widetilde{u}_n(x,y,t)|^{p-2}\widetilde{u}_n(x,y,t)\cdot\big(T_{k}(u_{n})(x,t)-T_{k}(u_{n})(y,t)\big) d\sigma dt\\
        &\leq\int_0^T\int_{D_\Omega}|\widetilde{u}_n(x,y,t)|^{p-2}\widetilde{u}_n(x,y,t)\cdot\big(T_k(u)(x,t)-T_k(u)(y,t)\big) d\sigma dt \\
        &+\varphi(n,\epsilon,\iota,h).
    \end{split}
    \end{equation*}
According to Lemma \ref{L3}, we get
    \begin{equation*}
        T_k(u_n)\to T_k(u)\quad\text{strongly in}\quad L^p(0,T; X_0^{s(\cdot,\cdot),p}(\Omega)).
    \end{equation*}
    
\textbf{Step 3} Choosing $\varphi=T_1\big(u_n-T_k(u_n)\big)$ as a test function for Definition \ref{D2} and given $h>0$, we find
    \begin{equation*}
    \begin{split}
        &\int_{\{|u_{n}|>h\}}\Theta_{1}(u_{n}\mp h)(T) dx-\int_{\{|u_{0n}|>h\}}\Theta_{1}(u_{0n}\mp h) dx \\
        &+\frac12\int_0^T\int_{D_\Omega}|\widetilde{u}_n(x,y,t)|^{p-2}\widetilde{u}_n(x,y,t)\\
        &\cdot\Big[T_1\big(u_n-T_k(u_n)\big)(x,t)-T_1\big(u_n-T_k(u_n)\big)(y,t)\Big] d\sigma dt \\
        &\leq\int_\Omega f_nT_1\big(u_n-T_k(u_n)\big) dxdt.
    \end{split}
    \end{equation*}
From \eqref{Eqn7} it can be known that
    \begin{equation*}
    \begin{split}
        &\frac{1}{2}\int_{0}^{T}\int_{D_\Omega}\big|T_{1}\big(u_n-T_k(u_n)\big)(x,t)-T_{1}\big(u_n-T_k(u_n)\big)(y,t)\big|^{p} d\sigma dt\\
        &\leq\frac{1}{2}\int_0^T\int_{D_\Omega}|\widetilde{u}_n(x,y,t)|^{p-2}\widetilde{u}_n(x,y,t)\\
        &\cdot\big[T_1\big(u_n-T_k(u_n)\big)(x,t)-T_1\big(u_n-T_k(u_n)\big)(y,t)\big] d\sigma dt\\
       &\leq\|f_n\|_{L^1(\Omega_T)}+\|u_{0n}\|_{L^1(\Omega)}\\
       &\leq\int_{\{|u_n|>h\}}|f_n| dxdt+\int_{\{|u_{0n}|>h\}}|u_{0n}| dx. 
    \end{split}
    \end{equation*}
It is easy to obtain
    \begin{equation*}
    \begin{split}
        &|\widetilde{u}_n(x,y,t)|^{p-2}\widetilde{u}_n(x,y,t)\Big[T_1\big(u_n-T_k(u_n)\big)(x,t)-T_1\big(u_n-T_k(u_n)\big)(y,t)\Big]\\
        &=T'_1\big(\xi_n-T_k(\xi_n)\big)\big(1-T'_k(\xi_n)\big)|\widetilde{u}_n(x,y,t)|^p\geq0,\;\mathrm{as}\;T'_1>0
    \end{split}
    \end{equation*}
Recalling ${u_n\to u}$ in $C([0,T];L^1(\Omega))$, we have $\lim\limits_{h\to+\infty}\mathrm{meas}\{(x,t)\in\Omega_T:|u_n|>h\}=0$ for all $n$.
Therefore for all $\big(u_n(x,t),u_n(y,t)\big) \in D_h$, we have
\begin{equation*}
\begin{split}
    |\widetilde{u}_n(x,y,t)|^{p-2}\widetilde{u}_n(x,y,t)\Big[T_1\big(u_n-T_k(u_n)\big)(x,t)-T_1\big(u_n-T_k(u_n)\big)(y,t)\Big]\\
    \geq|\widetilde{u}_n(x,y,t)|^{p-1}.
\end{split}
\end{equation*}
By using Fatou’s lemma we obtain the renormalized condition as $n\to\infty$
    \begin{equation}\label{Eqn8}
        \lim_{h\to+\infty}\iiint_{\{(u(x,t),u(y,t))\in D_h\}}|\widetilde{u}(x,y,t)|^{p-1} d\sigma dt=0.
    \end{equation}
    
\textbf{Step4} Let $H \in W^{1,\infty}(\mathbb{R})$ and $\mathrm{supp} H'\subset[-h,h]$ with $h > 0$. For every $\phi \in C^1(\bar{\Omega}_T)$ with $\phi=0$ in $\Omega^c\times(0,T)$ and $\phi(\cdot,T)=0$ in $\Omega$, choosing $\varphi=H'(u_n)\phi$ in Definition \ref{D1}
    \begin{equation*}
    \begin{split}
        &\int_0^T\int_\Omega\big(H(u_n)\big)_t\phi dx dt\\    &+\frac12\int_0^T\int_{D_\Omega}|\widetilde{u}_n(x,y,t)|^{p-2}\widetilde{u}_n(x,y,t)\big(H'(u_n)(x,t)-H'(u_n)(y,t)\big)\\
        &\cdot\frac{\phi(x,t)+\phi(y,t)}2 d\sigma dt\\        &+\frac12\int_0^T\int_{D_\Omega}|\widetilde{u}_n(x,y,t)|^{p-2}\widetilde{u}_n(x,y,t)\frac{H'(u_n)(x,t)+H'(u_n)(y,t)}2\\
        &\cdot\big(\phi(x,t)-\phi(y,t)\big)d\sigma dt \\
        &=\int_0^T\int_\Omega f_n\big(H'(u_n)\phi\big)dxdt,
    \end{split}
    \end{equation*}
the above formula can be rewritten as
    \begin{equation*}
        \int_0^T\int_\Omega \big(H(u_n)\big)_t\phi dx dt+A+B
        =\int_0^T\int_\Omega f_n\big(H'(u_n)\phi\big)dxdt.
    \end{equation*}
Since $H$ is bounded and continuous, $u_n\to u$ a.e. in $\Omega_T$ leads to 
$H(u_n)\to H(u)$ a.e. in $\Omega_T$ and 
$H(u_n)\rightharpoonup H(u)$ weakly* in $L^\infty(\Omega_T)$, 
then $\big(H(u_n)\big)_t\to \big(H(u)\big)_t$ in $D'(\Omega_T)$ as $n\to+\infty$, that is
    \begin{equation*}
        \int_0^T\int_\Omega\big(H(u_n)\big)_t\phi dxdt
        \to\int_0^T\int_\Omega\big(H(u)\big)_t\phi dxdt
    \end{equation*}
and
    \begin{equation*}
        \int_0^T\int_\Omega\big(H(u)\big)_t\phi dx dt
        =-\int_{0}^{T}\int_\Omega H(u)\frac{\partial\phi}{\partial t}dx dt
        -\int_\Omega H(u_0)\phi(x_0)dx dt
    \end{equation*}
For the right side of the equation, since $f_n\to f$ in $L^1(\Omega_T)$, as $n\to+\infty$, there is
    \begin{equation*}
        \int_0^T\int_\Omega f_n\big(H'(u_n)\phi\big)dxdt\to\int_0^T\int_\Omega f\big(H'(u)\phi\big)dxdt
    \end{equation*}
For the second term A, assume that $\mathrm{supp} H'\subset[-h,h]$, let ${D_\Omega}\times(0,T) = Q_1\cup Q_2\cup Q_3\cup Q_4\cup Q_5\cup Q_6$
\begin{align*}
    Q_1&=\{(x,y,t)\in {D_\Omega}\times(0,T):u_n(x,t)\geq h,\;u_n(y,t)\geq h\},\\
    Q_2&=\{(x,y,t)\in {D_\Omega}\times(0,T):u_n(x,t)\leq h,\;u_n(y,t)\leq h\},\\
    Q_3&=\{(x,y,t)\in {D_\Omega}\times(0,T):h\leq u_n(x,t)\leq h+1,\;u_n(y,t)\leq h\},\\
    Q_4&=\{(x,y,t)\in {D_\Omega}\times(0,T):u_n(x,t)\geq h+1,\;u_n(y,t)\leq h\},\\
    Q_5&=\{(x,y,t)\in {D_\Omega}\times(0,T):u_n(x,t)\leq h,\;h\leq u_n(y,t)\leq h+1\},\\
    Q_6&=\{(x,y,t)\in {D_\Omega}\times(0,T):u_n(x,t)\leq h,\;u_n(y,t)\geq h+1\}
\end{align*}
(1) If $(x,y,t)\in Q_{1}$ we have $H'(u_n)(x,t)=H’(u_n)(y,t)=0$, then $Q_1=0$.\\
(2) If $(x,y,t)\in Q_{2}$ we have  $u_n(x,t)=T_k(u_n)(x,t),\;u_n(y,t)=T_k(u_n)(y,t)$. Due to $T_k(u_n)\to T_k(u)$ strongly in $L^p(0,T; X_0^{s(\cdot,\cdot),p}(\Omega))$, we know that
    \begin{equation*}
    \begin{split}
        \frac{\big|T_{k}(u_{n})(x,t)-T_{k}(u_{n})(y,t)\big|^{p-2}\big(T_{k}(u_{n})(x,t)-T_{k}(u_{n})(y,t)\big)}{|x-y|^{\frac{(N+p{s\left(x,y\right)})(p-1)}p}} \\\to\frac{\big|T_k(u)(x,t)-T_k(u)(y,t)\big|^{p-2}\big(T_k(u)(x,t)-T_k(u)(y,t)\big)}{|x-y|^{\frac{(N+p{s\left(x,y\right)})(p-1)}{p}}}
    \end{split}
    \end{equation*}
strongly in $\displaystyle L^{\frac p{p-1}}({D_\Omega}\times(0,T))$.\\
Moreover, $u_n\to u$ a.e. in $\Omega_T, H\in W^{1,\infty}(\mathbb{R})$ and $\phi\in C^1(\overline{\Omega}_T)$ with $\phi=0$ in ${D_\Omega}\times(0,T)$ imply that
    \begin{equation*}
        H\big(T_k(u_n)(x,t)\big)-H\big(T_k(u_n)(y,t)\big)=H'(\xi_n)\big[T_k(u_n)(x,t)-T_k(u_n)(y,t)\big].
    \end{equation*}
Obviously
    \begin{equation*}
        \left\{\frac{H\big(T_k(u_n)(x,t)\big)-H\big(T_k(u_n)(y,t)\big)}{|x-y|^{\frac{N+ps(x,y)}{p}}}\cdot\frac{\phi(x,t)+\phi(y,t)}{2}\cdot\chi_{Q_2}\right\}_n
    \end{equation*}
is bounded in $L^p({D_\Omega}\times(0,T))$, 
    \begin{equation*}
    \begin{split}
        &\frac{H^{\prime}(u_{n})(x,t)-H^{\prime}(u_{n})(y,t)}{|x-y|^{\frac{N+p{s\left(x,y\right)}}{p}}}\cdot\frac{\phi(x,t)+\phi(y,t)}{2}\chi_{Q_2} \\
        &\rightharpoonup\frac{H'(u)(x,t)-H'(u)(y,t)}{|x-y|^{\frac{N+p{s\left(x,y\right)}}{p}}}\cdot\frac{\phi(x,t)+\phi(y,t)}{2}\chi_{Q_2}
    \end{split}
    \end{equation*}
weakly in $L^{p}(D_\Omega\times(0,T))$. Thus, as $n\to\infty$
    \begin{equation*}
    \begin{split}
        &\iint_{Q_2}|\widetilde{u}_n(x,y,t)|^{p-2}\widetilde{u}_n(x,y,t)\big(H'(u_n)(x,t)-H'(u_n)(y,t)\big)\cdot\frac{\phi(x,t)+\phi(y,t)}2 d\sigma dt\\
        &\to\iint_{Q_{2}}|\widetilde{u}(x,y,t)|^{p-2}\widetilde{u}(x,y,t)\big(H'(u)(x,t)-H'(u)(y,t)\big)\cdot\frac{\phi(x,t)+\phi(y,t)}2 d\sigma dt.
    \end{split}
    \end{equation*}
(3) If $(x,y,t)\in Q_{3}$, it is similar to $Q_{2}$, the same applies to $Q_{5}$.\\
(4) If $(x,y,t)\in Q_{4}$, we derive
    \begin{equation*}
        \lim_{h\to\infty}\lim_{n\to\infty}\iiint_{\{(u_n(x,t),u_n(y,t))\in D_h\}}|\widetilde{u}_n(x,y,t)|^{p-1} d\sigma dt=0,
    \end{equation*}
based on \eqref{Eqn8}. Then
    \begin{equation*}
    \begin{split}
    &\lim_{h\to\infty}\lim_{n\to\infty}\iint_{Q_4}|\widetilde{u}_n(x,y,t)|^{p-2}\widetilde{u}_n(x,y,t)\\
    &\cdot\big(H'(u_n)(x,t)-H'(u_n)(y,t)\big)\frac{\phi(x,t)+\phi(y,t)}2 d\sigma dt=0.
    \end{split}
    \end{equation*}
It also follows similarly for $Q_{6}$.
Therefore, we have
    \begin{equation*}
    \begin{split}
        &\lim_{h\to\infty}\lim_{n\to\infty}A \\&=\lim_{h\to\infty}\iint_{Q_{2}}|\widetilde{u}(x,y,t)|^{p-2}\widetilde{u}(x,y,t)\big(H^{\prime}(u)(x,t)-H^{\prime}(u)(y,t)\big)\cdot\frac{\phi(x,t)+\phi(y,t)}2 d\sigma dt \\
        &+2\lim_{h\to\infty}\iint_{Q_{3}}|\widetilde{u}(x,y,t)|^{p-2}\widetilde{u}(x,y,t)\big(H^{\prime}(u)(x,t)-H^{\prime}(u)(y,t)\big)\cdot\frac{\phi(x,t)+\phi(y,t)}2 d\sigma dt \\
        &=\int_0^T\int_{D_\Omega}|\widetilde{u}(x,y,t)|^{p-2}\widetilde{u}(x,y,t)\big(H^{\prime}(u)(x,t)-H^{\prime}(u)(y,t)\big)\cdot\frac{\phi(x,t)+\phi(y,t)}2 d\sigma dt.
    \end{split}
    \end{equation*}
Similar to the same arguments as before for the third term B. 

Therefore, we obtain
\begin{equation*}
\begin{split}
    &\int_0^T\int_\Omega \big(H(u)\big)_t\phi dx dt\\   &+\frac12\int_0^T\int_{D_\Omega}|\widetilde{u}(x,y,t)|^{p-2}\widetilde{u}(x,y,t)\big(H'(u)(x,t)-H'(u)(y,t)\big)\cdot\frac{\phi(x,t)+\phi(y,t)}2 d\sigma dt\\        &+\frac12\int_0^T\int_{D_\Omega}|\widetilde{u}(x,y,t)|^{p-2}\widetilde{u}(x,y,t)\frac{H'(u)(x,t)+H'(u)(y,t)}2\cdot\big(\phi(x,t)-\phi(y,t)\big) d\sigma dt \\    
    &=\int_0^T\int_\Omega f\big(H'(u)\phi\big) dx dt.
\end{split}
\end{equation*}
that is
\begin{equation*}
\begin{split}
    &-\int_{0}^{T} \int_{\Omega}H(u)\frac{\partial\phi}{\partial t} dxdt 
    -\int_{\Omega}H(u_{0})\phi(x,0) dx\\
    &+\frac{1}{2}\int_{0}^{T}\int_{D_\Omega}|\widetilde{u}(x,y,t)|^{p-2}\widetilde{u}(x,y,t)\big[\big(H'(u)\phi\big)(x,t)-\big(H'(u)\phi\big)(y,t)\big] d\sigma dt \\
    &=\int_0^T\int_\Omega f\big(H'(u)\phi\big) dxdt
\end{split}
\end{equation*}
for any $\phi\in C^1(\overline{\Omega}_T)$ with $\phi=0$ in $\Omega^c\times(0,T)$ and $\phi(\cdot,T)=0$ in $\Omega$. 

(2) Uniqueness of renormalized solutions.

For $h > 0$, let function $H_h\in W^{1,\infty}(\mathbb{R})$ with $\mathrm{supp}H'_h\subset[-h-1,h+1]$ satisfied
\begin{equation*}
    \left.\left\{\begin{array}{ll}H_h(r)=r&\text{if}\;|r|<h,\\H_h(r)=\left(h+\frac{1}{2}\right)\mp\frac{1}{2}(r\mp(h+1))^2&\text{if}\;h\leq|r|\leq h+1,\\H_h(r)=\pm\left(h+\frac{1}{2}\right)&\text{if}\;|r|>h+1.\end{array}\right.\right.
\end{equation*}
and
\begin{equation*}
    \left\{\begin{array}{ll}H_h'(r)=1&\text{if}\;|r|<h,\\H_h'(r)=h+1-|r|&\text{if}\;h\leq|r|\leq h +1,\\H_h'(r)=0&\text{if}\;|r|>h + 1.\end{array}\right.
\end{equation*}
Let $u$ and $v$ be two renormalized solutions for problem \eqref{Quotient}, subtract one equation from the other, and for every fixed $k > 0$, let $\varphi=H'_h(u)T_{k}\big(H_{h}(u)\big)$ with $\phi=T_k\big(H_h(u)\big)$ as a test function, therefore we have
     \begin{equation*}
     \begin{split}
         &\int_0^T\int_{\Omega}\big(H_h(u)-H_h(v)\big)_t\cdot T_k\big(H_h(u)-H_h(v)\big)dxdt\\
         &+\frac{1}{2}\int_{0}^{T}\int_{D_\Omega} \Big[|\widetilde{u}(x,y,t)|^{p-2}\widetilde{u}(x,y,t)\cdot\big(H'_{h}(u)(x,t)-H'_{h}(u)(y,t)\big)\\
         &-|\widetilde{v}(x,y,t)|^{p-2}\widetilde{v}(x,y,t)\cdot\big(H’_{h}(v)(x,t)-H'_{h}(v)(y,t)\big)\Big] \\
         &\cdot\frac{T_{k}\big(H_{h}(u)-H_{h}(v)\big)(x,t)+T_{k}\big(H_{h}(u)-H_{h}(v)\big)(y,t)}{2} d\sigma dt\\
         &+\frac{1}{2}\int_{0}^{T}\int_{D_\Omega}\Big(|\widetilde{u}(x,y,t)|^{p-2}\widetilde{u}(x,y,t)\frac{H'_{h}(u)(x,t)+H'_{h}(u)(y,t)}{2}\\
         &-|\widetilde{v}(x,y,t)|^{p-2}\widetilde{v}(x,y,t)\frac{H'_{h}(v)(x,t)+H'_{h}(v)(y,t)}{2}\Big)\\
         &\cdot\Big[T_{k}\big(H_{h}(u)-H_{h}(v)\big)(x,t)-T_{k}\big(H_{h}(u)-H_{h}(v)\big)(y,t)\Big] d\sigma dt\\
         &=\int_0^T\int_\Omega f\big(H'_h(u)-H'_h(v)\big)T_k\big(H_h(u)-H_h(v)\big) dxdt
     \end{split}
     \end{equation*}
and express the above equation as $A+B+C=Z$. Setting $|u|,|v|\leq k\leq h$, we have $T_k(r)=r, H_h(r)=r, H'_h(r)=1$, thus $T_k\big(H_h(u)-H_h(v)\big)=T_k(u-v)=u-v$,
\begin{equation*}
\begin{split}    A&=\int_{\Omega}\Theta_{k}\big(H_{h}(u)-H_{h}(v)\big)(T) dx-\int_{\Omega}\Theta_{k}\big(H_{h}(u)-H_{h}(v)\big)(0) dx\\         &=\int_\Omega\Theta_k\big(H_h(u)-H_h(v)\big)(T) dx\geq0.
\end{split}
\end{equation*}
Assume $\displaystyle\frac{u(x,t)+u(y,t)-v(x,t)-v(y,t)}{2}\leq M$, then
     \begin{equation*}
     \begin{split}
         |B|&\leq\frac{1}{2}\int_{0}^{T}\int_{D_\Omega} \Big[|\widetilde{u}(x,y,t)|^{p-2}\widetilde{u}(x,y,t)\cdot\big(H'_{h}(u)(x,t)-H'_{h}(u)(y,t)\big)\\
         &+|\widetilde{v}(x,y,t)|^{p-2}\widetilde{v}(x,y,t)\cdot\big(H'_{h}(v)(x,t)-H'_{h}(v)(y,t)\big)\Big] \\
         &\cdot\frac{T_{k}\big(H_{h}(u)-H_{h}(v)\big)(x,t)+T_{k}\big(H_{h}(u)-H_{h}(v)\big)(y,t)}{2} d\sigma dt\\
         &\leq M\Big(\iiint_{\{(u(x,t),u(y,t))\in D_h\}}|\widetilde{u}(x,y,t)|^{p-1} d\sigma dt\\
         &+\iiint_{\{(v(x,t),v(y,t))\in D_h\}}|\widetilde{v}(x,y,t)|^{p-1} d\sigma dt\Big),
     \end{split}
     \end{equation*}
we obtain $B\to 0$ as $h\to\infty$ by \eqref{Eqn8}. We can divide the third term into three parts, by denoting
\begin{equation*}
\begin{split}
    C&=\frac{1}{2}\int_{0}^{T} \int_{D_\Omega}\big(|\widetilde{u}(x,y,t)|^{p-2}\widetilde{u}(x,y,t)-|\widetilde{v}(x,y,t)|^{p-2}\widetilde{v}(x,y,t)\big) \\
    &\cdot\Big[T_k\big(H_h(u)-H_h(v)\big)(x,t)-T_k\big(H_h(u)-H_h(v)\big)(y,t)\Big] d\sigma dt \\
    &+\frac{1}{2}\int_{0}^{T}\int_{D_\Omega}|\widetilde{u}(x,y,t)|^{p-2}\widetilde{u}(x,y,t)\Big(\frac{H_{h}^{\prime}(u)(x,t)+H_{h}^{\prime}(u)(y,t)}{2}-1\Big) \\
    &\cdot\Big[T_k\big(H_h(u)-H_h(v)\big)(x,t)-T_k\big(H_h(u)-H_h(v)\big)(y,t)\Big] d\sigma dt \\
    &+\frac{1}{2}\int_{0}^{T}\int_{D_\Omega}|\widetilde{v}(x,y,t)|^{p-2}\widetilde{v}(x,y,t)\Big(1-\frac{H'_{h}(v)(x,t)+H'_{h}(v)(y,t)}{2}\Big) \\
    &\cdot\Big[T_k\big(H_h(u)-H_h(v)\big)(x,t)-T_k\big(H_h(u)-H_h(v)\big)(y,t)\Big] d\sigma dt \\
    &=C_1+C_2+C_3,
\end{split}
\end{equation*}
\begin{equation*}
\begin{split}
    C_1&\geq\frac{1}{2}\iiint_{\{|u-v|\leq k\}\cap\{|u|,|v|\leq k\}}\big(|\widetilde{u}(x,y,t)|^{p-2}\widetilde{u}(x,y,t)-|\widetilde{v}(x,y,t)|^{p-2}\widetilde{v}(x,y,t)\big) \\
    &\cdot\Big[T_k\big(H_h(u)-H_h(v)\big)(x,t)-T_k\big(H_h(u)-H_h(v)\big)(y,t)\Big] d\sigma dt \\
    &=\frac{1}{2}\iiint_{\{|u-v|\leq k\}\cap\{|u|,|v|\leq k\}}\big(|\widetilde{u}(x,y,t)|^{p-2}\widetilde{u}(x,y,t)-|\widetilde{v}(x,y,t)|^{p-2}\widetilde{v}(x,y,t)\big)\\
    &\cdot\big(\widetilde{u}(x,y,t)-\widetilde{v}(x,y,t)\big) d\sigma dt.
\end{split}
\end{equation*}     
By the Lebesgue dominated convergence theorem, we conclude that $C_2, C_3\to 0$ as $h\to\infty$ and we deduce that $Z\to 0$ as $f\big(H_h'(u)-H_h'(v)\big)\to0$ strongly in $L^1(\Omega_T)$. Therefore, sending $h\to\infty$, we have
     \begin{equation*}
     \begin{split}
         &\iiint_{\{|u|\leq\frac k2,|v|\leq\frac k2\}}\big(|\widetilde{u}(x,y,t)|^{p-2}\widetilde{u}(x,y,t)-|\widetilde{v}(x,y,t)|^{p-2}\widetilde{v}(x,y,t)\big)\\
         &\cdot[\widetilde{u}(x,y,t)-\widetilde{v}(x,y,t)] d\sigma dt=0,
     \end{split}
     \end{equation*}
on the set $\displaystyle{\{|u|\leq\frac k2,|v|\leq\frac k2\}}$. It is claimed that $\widetilde{u} = \widetilde{v}$, since $k$ is arbitrary, we have $u=v$ for a.e. $x,y\in\mathbb{R}^N,\;t\in[0,T]$. 

In the Poincaré inequality \eqref{Eqn2} letting $p = 1$
     \begin{equation*}
         \int_{0}^{T} \int_\Omega|u(x,t)-v(x,t)|dxdt
         \leq C\int_{0}^{T} \iint_{D_\Omega}\frac{|\widetilde{u}(x,y,t)-\widetilde{v}(x,y,t)|}{|x-y|^{N+{s\left(x,y\right)}}} dxdydt
    \end{equation*}
yields. 
Thus we have $u = v$ a.e. in $\Omega_T$. 
\end{proof}
Subsequently, we prove that the problem admits unique entropy solution and it is equivalent to that of the renormalized solution $u$.\\
\begin{proof}[Proof of Theorem \ref{result2}]

(1) Existence of entropy solutions.

Based on the arguments \cite{ref5}, we can establish the existence of entropy solutions, while \cite{ref20} allows us to conclude the equivalence between renormalized solutions and entropy solutions. We will omit the detailed proof here.

(2) Uniqueness of entropy solutions. 

Suppose that $u$ and $v$ are two entropy solutions of problem \eqref{Quotient} and construct a sequence $u_n$. 
Choosing $\varphi_1= T_k\big(v-H_h(u_n)\big)$ with $\phi_1=H_h(u_n)$ as a test function for entropy solution $v$, we have
     \begin{equation*}
     \begin{split}
         &\int_\Omega\Theta_k\big(v-H_h(u_n)\big)(T) dx-\int_\Omega\Theta_k\big(u_0-H_h(u_{0n})\big)dx\\
         &+\int_0^T\int_\Omega(u_n)_t H'_h(u_n) T_k\big(v-H_h(u_n)\big)dxdt\\
         &+\frac12\int_0^T\int_{D_\Omega}|\widetilde{v}(x,y,t)|^{p-2}\widetilde{v}(x,y,t)\cdot\big(\varphi_1(x,t)-\varphi_1(y,t)\big)d\sigma dt\\
         &\leq\int_0^T\int_\Omega f T_k\big(v-H_h(u_n)\big) dxdt.
     \end{split}
     \end{equation*}
To eliminate the third term on the left side of the equation, we assume $u_n$ is the renormalized (entropy) solution of problem \eqref{Eqn4}, taking $\varphi_2=H'_h(u_n) T_k\big(v-H_h(u_n)\big)$ as a test function with $\phi_2=T_k\big(v-H_h(u_n)\big)$, we obtain
     \begin{equation*}
     \begin{split}
         &\int_0^T\int_\Omega(u_n)_t H'_h(u_n) T_k\big(v-H_h(u_n)\big)dxdt \\
         &+\frac12\int_0^T\int_{D_\Omega}|\widetilde{u}_n(x,y,t)|^{p-2}\widetilde{u}_n(x,y,t)\big(H'_h(u_n)(x,t)-H'_h(u_n)(y,t)\big)\\
         &\cdot\frac{\phi_2(x,t)+\phi_2(y,t)}2 d\sigma dt \\
         &+\frac12\int_0^T\int_{D_\Omega}|\widetilde{u}_n(x,y,t)|^{p-2}\widetilde{u}_n(x,y,t)\frac{H'_h(u)(x,t)+H'_h(u)(y,t)}2\\
         &\cdot\big(\phi_2(x,t)-\phi_2(y,t)\big)d\sigma dt \\
         &=\int_0^T\int_\Omega f_n H'_h(u_n)T_k\big(v-H_h(u_n)\big)dxdt.  
     \end{split}
     \end{equation*}
Taking the difference between the above two equations, then
     \begin{equation*}
     \begin{split}
         &\int_\Omega\Theta_k(v-H_h(u_n))(T) dx-\int_\Omega\Theta_k(u_0-H_h(u_{0n})) dx \\
         &-\frac12\int_0^T\int_{D_\Omega}|\widetilde{u}_n(x,y,t)|^{p-2}\widetilde{u}_n(x,y,t)(H'_h(u_n)(x,t)-H'_h(u_n)(y,t))\\
         &\cdot\frac{\varphi_1(x,t)+\varphi_1(y,t)}2 d\sigma dt \\
         &-\frac12\int_0^T\int_{D_\Omega}|\widetilde{u}_n(x,y,t)|^{p-2}\widetilde{u}_n(x,y,t)+\frac{H'_h(u)(x,t)+H'_h(u)(y,t)}2\\
         &\cdot\big(\varphi_1(x,t)-\varphi_1(y,t)\big)d\sigma dt \\
         &+\frac12\int_0^T\int_{D_\Omega}|\widetilde{v}(x,y,t)|^{p-2}\widetilde{v}(x,y,t)\cdot\big(\varphi_1(x,t)-\varphi_1(y,t)\big)d\sigma dt \\
         &\leq\int_0^T\int_\Omega\big(f-f_n H'_h(u_n)\big)T_k\big(v-H_h(u_n)\big) dxdt. 
     \end{split}
     \end{equation*}
The third term on the left is denoted as A, and the fourth and fifth terms are denoted as B. 
Let $|u|,\;|v|\leq k\leq h$, we have $T_k(r)=r,\;H_h(r)=r,\;H'_h(r)=1$, thus $\varphi_1= T_k\big(v-H_h(u_n)\big)=T_k(v-u_n)=v-u_n$, consequently $\varphi_1(x,t)+\varphi_1(y,t)\leq\widetilde{v}(x,y,t)$ and $\varphi_1(x,t)-\varphi_1(y,t)=\widetilde{v}(x,y,t)-\widetilde{u}(x,y,t)$.
     \begin{equation*}
     \begin{split}
         |A|&\leq\frac12\int_0^T\int_{D_\Omega}|\widetilde{u}_n(x,y,t)|^{p-2}\widetilde{u}_n(x,y,t)\big(H'_h(u_n)(x,t)+H'_h(u_n)(y,t)\big)\\
         &\cdot\frac{\varphi_1(x,t)+\varphi_1(y,t)}2 d\sigma dt \\
         &\leq\int_0^T\int_{D_\Omega}|\widetilde{u}_n(x,y,t)|^{p-2}\widetilde{u}_n(x,y,t)\widetilde{v}(x,y,t) d\sigma dt\\
         &\leq k\iiint_{\{(u_n(x,t),u_n(y,t))\in D_h}|\widetilde{u}_n(x,y,t)|^{p-1}d\sigma dt,
     \end{split}
     \end{equation*}
according to \eqref{Eqn8} that $|A|\to0$  as $n \to +\infty$ and $h\to+\infty$. We can divide the term into two parts, by denoting 
     \begin{equation*}
     \begin{split}
         B&=\frac{1}{2}\int_{0}^{T}\int_{D_\Omega}\big(|\widetilde{v}(x,y,t)|^{p-2}\widetilde{v}(x,y,t)-|\widetilde{u}_n(x,y,t)|^{p-2}\widetilde{u}_n(x,y,t)\big)\\
         &\cdot\big(\varphi_1(x,t)-\varphi_1(y,t)) d\sigma dt \\
         &+\frac{1}{2}\int_0^T\int_{D_\Omega}|\widetilde{u}_n(x,y,t)|^{p-2}\widetilde{u}_n(x,y,t)\Big(1-\frac{H'_h(u_n)(x,t)+H'_h(u_n)(y,t)}{2}\Big)\\
         &\cdot\big(\varphi_1(x,t)-\varphi_1(y,t)\big)d\sigma dt\\
         &=B_1+B_2.
     \end{split}
     \end{equation*}
By the Lebesgue dominated convergence theorem and Fatou’s lemma, we get $B_2\to0$, owing to \eqref{Eqn5} we have $f_n\to f$ in $L^1(\Omega_T)$, the right side converges to 0.  And since $0\leq\Theta_{k}(r)\leq k|r|$ as $h\to+\infty$, we have
\begin{align*}
    |\Theta_k\big(v-H_h(u)\big)(T)|&\leq k\big(|v(T)|+|u(T)|\big),\\
    |\Theta_k\big(u_0-H_h(u_0)\big)|&\leq k|u_0|,
\end{align*}
therefore
\begin{align*}
    \int_{\Omega}\Theta_{k}(v-H_h(u))(T) dx&\to\int_{\Omega}\Theta_{k}(v-u)(T) dx,\\
    \int_{\Omega}\Theta_{k}(u_{0}-H_h(u_{0}))dx&\to0.
\end{align*}
Thus, we deduce that
     \begin{equation*}
     \begin{split}
         &\int_\Omega\Theta_k(v-u)(T)dx\\
         &+\frac12\int_0^T\iint_{\{|u|\leq\frac k2,|v|\leq\frac k2\}}\big(|\widetilde{v}(x,y,t)|^{p-2}\widetilde{v}(x,y,t)-|\widetilde{u}(x,y,t)|^{p-2}\widetilde{u}(x,y,t)\big)\\
         &\cdot\big(\widetilde{v}(x,y,t)-\widetilde{u}(x,y,t)\big) d\sigma dt\leq0.
     \end{split}
     \end{equation*}
Since the first term is positive, we conclude that $u = v$ a.e. in $\Omega_T$. 
\end{proof}

\begin{proof}[Proof of Theorem \ref{result3}]
Suppose $u_0, v_0 \in L^2(\Omega)$ and $f, g \in L^{p'}(0, T; X_0^{s(\cdot,\cdot),p}(\Omega)^*)$, then we can obtain $u$ and $v$ are two weak solutions for problems \eqref{Quotient}, and using $\varphi=(u-v)^+\chi_{(0,t)}$ as a test function in Definition \ref{D1}, we get
      \begin{equation*}
      \begin{split}
          &\int_{0}^{t}\int_{\Omega}(u-v)_{t}(u-v)^+ dxd\tau\\
          &+\frac12\int_0^T\iint_{\Omega}\big(|\widetilde{u}(x,y,t)|^{p-2}\widetilde{u}(x,y,t)-|\widetilde{v}(x,y,t)|^{p-2}\widetilde{v}(x,y,t)\big)\\
          &\cdot\big(\varphi(x,t)-\varphi(y,t)\big) d\sigma d\tau \\
          &=\int_0^t\int_\Omega(f-g)(u-v)^+ dxd\tau\leq0.
      \end{split}
      \end{equation*}
Since the second term in the above equation is non-negative, we can obtain
      \begin{equation*}
      \begin{split}
          \int_{0}^{t}\int_{\Omega}(u-v)_{t}(u-v)^+ dxd\tau
          &=\frac12\int_0^t\int_\Omega\frac d{dt}\big[(u-v)^+\big]^2 dxd\tau\\
          &=\frac12\int_\Omega\big[(u-v)^+\big]^2(t) dx-\frac12\int_\Omega\big[(u_0-v_0)^+\big]^2 dx\leq0.
     \end{split}
      \end{equation*}
Due to $u_0 \leq v_0$, the second term is non-negative, we conclude that $(u-v)^+=0$ a.e. in $\Omega_T$,
thus we obtain $u\leq v$ a.e. in $\Omega_T$. 

Now consider $u$ and $v$ as the renormalized (entropy) solution of problems \eqref{Quotient} with $L^1$-data. Assume ${f_n},\;{g_n},\;{u_{0n}}$,\;${v_{0n}}\subset C_0^\infty(\Omega)$ and $f_n\to f,\;g_n\to g$ in $L^1(\Omega_T)$, $u_{0n}\to u_0,\;v_{0n}\to v_0$ in $L^1(\Omega)$ such that
\begin{align*}
f_n &\leq g_n, & 
u_{0n} &\leq v_{0n}, \\
\|f_n\|_{L^1(\Omega_T)} &\leq \|f\|_{L^1(\Omega_T)}, & \|g_n\|_{L^1(\Omega_T)} &\leq \|g\|_{L^1(\Omega_T)}, \\
\|u_{0n}\|_{L^1(\Omega)} &\leq \|u_0\|_{L^1(\Omega)}, & \|v_{0n}\|_{L^1(\Omega_T)} &\leq \|v_0\|_{L^1(\Omega)}.
\end{align*}

We construct two sequences ${u_n}$ and ${v_n}$ for each of the two renormalized (entropy) solutions $u$ and $v$, which can be obtained $u_n\leq v_n$ a.e.in $\Omega_T$ by applying the above conclusions. 
According to Theorem \ref{result1} and Theorem \ref{result2}, it can be concluded that $u_n \to u$ and $v_n \to v$ a.e. in $\Omega_T$. Therefore, we conclude that $u\leq v$ a.e. in $\Omega_T$. 
\end{proof}

\bibliographystyle{plain}
\normalsize
\bibliography{ref}

\begin{thebibliography}{10}

\bibitem{ref6}
Boumediene Abdellaoui, Ahmed Attar, and Rachid Bentifour.
\newblock On the fractional $p$-laplacian equations with weight and general datum.
\newblock {\em Advances in Nonlinear Analysis}, 8(1):144--174, 2019.

\bibitem{ref28}
Natha{\"e}l Alibaud, Boris Andreianov, and Mostafa Bendahmane.
\newblock Renormalized solutions of the fractional laplace equation.
\newblock {\em Comptes Rendus. Math{\'e}matique}, 348(13-14):759--762, 2010.

\bibitem{ref1}
Angelo Alvino, Lucio Boccardo, Vincenzo Ferone, Luigi Orsina, and Guido Trombetti.
\newblock Existence results for nonlinear elliptic equations with degenerate coercivity.
\newblock {\em Annali di matematica pura ed applicata}, 182(1):53--79, 2003.

\bibitem{ref4}
Harbir Antil and Carlos~N. Rautenberg.
\newblock Sobolev spaces with non-muckenhoupt weights, fractional elliptic operators, and applications.
\newblock {\em SIAM Journal on Mathematical Analysis}, 51(3):2479--2503, 2019.

\bibitem{ref26}
M~Bendahmane, P~Wittbold, and A~Zimmermann.
\newblock Renormalized solutions for a nonlinear parabolic equation with variable exponents and $l^1$-data.
\newblock {\em Journal of differential equations}, 249(6):1483--1515, 2010.

\bibitem{ref35}
Ph~B{\'e}nilan and Michael~G Crandall.
\newblock Completely accretive operators.
\newblock In {\em Semigroup theory and evolution equations}, pages 41--75. CRC Press, 2023.

\bibitem{ref5}
Philippe B{\'e}nilan, Lucio Boccardo, Thierry Gallou{\"e}t, Ron Gariepy, Michel Pierre, and Juan~Luis V{\'a}zquez.
\newblock An $l^1$-theory of existence and uniqueness of solutions of nonlinear elliptic equations.
\newblock {\em Annali della Scuola Normale Superiore di Pisa-Classe di Scienze}, 22(2):241--273, 1995.

\bibitem{ref16}
Dominique Blanchard and Fran{\c{c}}ois Murat.
\newblock Renormalised solutions of nonlinear parabolic problems with $l^1$-data: existence and uniqueness.
\newblock {\em Proceedings of the Royal Society of Edinburgh Section A: Mathematics}, 127(6):1137--1152, 1997.

\bibitem{ref17}
Dominique Blanchard, Francesco Petitta, and Hicham Redwane.
\newblock Renormalized solutions of nonlinear parabolic equations with diffuse measure data.
\newblock {\em Manuscripta Mathematica}, 141(3-4):601--635, 2013.

\bibitem{ref25}
Luis Caffarelli.
\newblock Non-local diffusions, drifts and games.
\newblock In {\em Nonlinear Partial Differential Equations: The Abel Symposium 2010}, pages 37--52. Springer, 2012.

\bibitem{ref24}
Luis Caffarelli and Luis Silvestre.
\newblock An extension problem related to the fractional laplacian.
\newblock {\em Communications in Partial Differential Equations}, 32(8):1245--1260, 2007.

\bibitem{ref2}
Andrea Dall'Aglio.
\newblock Approximated solutions of equations with $l^1$-data. application to the $h$-convergence of quasi-linear parabolic equations.
\newblock {\em Annali di Matematica pura ed applicata}, 170:207--240, 1996.

\bibitem{ref18}
Eric Darve, Marta D’Elia, Roberto Garrappa, Andrea Giusti, and Natalia~L. Rubio.
\newblock On the fractional laplacian of variable order.
\newblock {\em Fractional Calculus and Applied Analysis}, 25(1):1--14, 2022.

\bibitem{ref101}
Mengyao Ding, Chao Zhang, and Shulin Zhou.
\newblock Global boundedness and h{\"o}lder regularity of solutions to general $p(x,t)$-laplace parabolic equations.
\newblock {\em Math. Methods Appl. Sci}, 43(9):5809--5831, 2020.

\bibitem{ref100}
Mengyao Ding, Chao Zhang, and Shulin Zhou.
\newblock On optimal $c^{1,\alpha}$ estimates for $p(x)$-laplace type equations.
\newblock {\em Nonlinear Analysis}, 200, 2020.

\bibitem{ref30}
Ronald~J DiPerna and Pierre-Louis Lions.
\newblock On the cauchy problem for boltzmann equations: global existence and weak stability.
\newblock {\em Annals of Mathematics}, pages 321--366, 1989.

\bibitem{ref20}
Jérôme Droniou and Alain Prignet.
\newblock Equivalence between entropy and renormalized solutions for parabolic equations with smooth measure data.
\newblock {\em Nonlinear Differential Equations and Applications NoDEA}, 14(1-2):181--205, 2007.

\bibitem{ref102}
Bin Ge and Chao Zhang.
\newblock Existence of a positive solution to kirchhoff problems involving the fractional laplacian.
\newblock {\em Zeitschrift für Analysis und ihre Anwendungen}, 34(4):419--434, 2015.

\bibitem{ref103}
Bin Ge and Chao Zhang.
\newblock Existence of positive solutions to elliptic problems involving the fractional laplacian.
\newblock {\em Boundary Value Problems}, 2015(1):1--12, 2015.

\bibitem{ref31}
R{\"u}diger Landes.
\newblock On the existence of weak solutions for quasilinear parabolic initial-boundary value problems.
\newblock {\em Proceedings of the Royal Society of Edinburgh Section A: Mathematics}, 89(3-4):217--237, 1981.

\bibitem{ref9}
Chiara Leone and Alessio Porretta.
\newblock Entropy solutions for nonlinear elliptic equations in $l^1$.
\newblock {\em Nonlinear Analysis: Theory, Methods \& Applications}, 32(3):325--334, 1998.

\bibitem{ref33}
Yuhang Li, Zhichang Guo, Jingfeng Shao, Yao Li, and Boying Wu.
\newblock Variable-order fractional 1-laplacian diffusion equations for multiplicative noise removal.
\newblock {\em Fractional Calculus and Applied Analysis}, (prepublish):1--40, 2024.

\bibitem{ref29}
Pierre-Louis Lions.
\newblock {\em Mathematical Topics in Fluid Mechanics: Volume 2: Compressible Models}, volume~2.
\newblock oxford university press, 1996.

\bibitem{ref34}
Qiang Liu, Zhichang Guo, and Chunpeng Wang.
\newblock Renormalized solutions to a reaction-diffusion system applied to image denoising.
\newblock {\em Discrete Contin. Dyn. Syst. Ser. B}, 21(6):1839--1858, 2016.

\bibitem{ref23}
Al-Hawmi M., Benkirane A., Hjiaj H., and Touzani A.
\newblock Existence and uniqueness of entropy solution for some nonlinear elliptic unilateral problems in musielak-orlicz-sobolev spaces.
\newblock {\em Annals of the University of Craiova, Mathematics and Computer Science Series}, 44(1):1--20, 2017.

\bibitem{ref21}
Jos{\'e}~M Maz{\'o}n, Julio~D Rossi, and Juli{\'a}n Toledo.
\newblock Fractional $p$-laplacian evolution equations.
\newblock {\em Journal de math{\'e}matiques Pures et appliqu{\'e}es}, 105(6):810--844, 2016.

\bibitem{ref11}
Ceretani~Andrea N. and Rautenberg~Carlos N.
\newblock The spatially variant fractional laplacian.
\newblock {\em Fractional Calculus and Applied Analysis}, 26(6):2837--2873, 2023.

\bibitem{ref19}
Eleonora~Di Nezza, Giampiero Palatucci, and Enrico Valdinoci.
\newblock Hitchhiker's guide to the fractional sobolev spaces.
\newblock {\em Bulletin des sciences mathématiques}, 136(5):521--573, 2012.

\bibitem{ref3}
Alessio Porretta.
\newblock Existence results for nonlinear parabolic equations via strong convergence of truncations.
\newblock {\em Annali di matematica pura ed applicata}, 177(1):143--172, 1999.

\bibitem{ref104}
Kaimin Teng, Chao Zhang, and Shulin Zhou.
\newblock Renormalized and entropy solutions for the fractional $p$-laplacian evolution equations.
\newblock {\em Journal of Evolution Equations}, 19(2):559--584, 2019.

\bibitem{ref105}
Chao Zhang.
\newblock Gradient estimates for $p$-laplacian equation in composite reifenberg domains.
\newblock {\em Nonlinear Analysis}, 133:134--143, 2016.

\bibitem{ref107}
Chao Zhang and Xia Zhang.
\newblock Renormalized solutions for the fractional $p(x)$-laplacian equation with $l^1$-data.
\newblock {\em Nonlinear Analysis}, 190:111610--111610, 2020.

\bibitem{ref106}
Chao Zhang and Shulin Zhou.
\newblock Renormalized and entropy solutions for nonlinear parabolic equations with variable exponents and $l^1$-data.
\newblock {\em Journal of Differential Equations}, 248(6):1376--1400, 2009.

\bibitem{ref108}
Chao Zhang, Shulin Zhou, and Bin Ge.
\newblock Gradient estimates for the $p(x)$-laplacian equation in $\mathbb{R}^n$.
\newblock {\em Annales Polonici Mathematici}, 114(1):45--65, 2015.

\bibitem{ref109}
Chao Zhang, Shulin Zhou, and Xiaoping Xue.
\newblock Global gradient estimates for the parabolic $p(x,t)$-laplacian equation.
\newblock {\em Nonlinear Analysis}, 105:86--101, 2014.

\end{thebibliography}

\end{document}